\theoremstyle{thmstyleone}%
\newtheorem{theorem}{Theorem}%
\newtheorem{lemma}{Lemma}% 
\newtheorem{corollary}{Corollary}% 
\theoremstyle{thmstyletwo}%
\newtheorem{example}{Example}%
\newtheorem{remark}{Remark}%
\theoremstyle{thmstylethree}%
\def\rhs{g}
\newcommand\bb{{\boldsymbol b}}
\newcommand\RR{\mathbb{R}}
\newcommand\TT{{\mathscr T}}
\begin{document}

\title[On algebraically stabilized schemes for convection--diffusion--reaction problems]
{On algebraically stabilized schemes for convection--diffusion--reaction problems}

\author[1,2]{\fnm{Volker} \sur{John}}\email{john@wias-berlin.de, ORCID 0000-0002-2711-4409}

\author*[3]{\fnm{Petr} \sur{Knobloch}}\email{knobloch@karlin.mff.cuni.cz, ORCID 0000-0003-2709-5882}

\affil[1]{\orgname{Weierstrass Institute for Applied Analysis and
Stochastics (WIAS)},
\orgaddress{\street{Mohrenstr. 39}, \city{Berlin}, \postcode{10117},
\country{Germany}}}

\affil[2]{\orgdiv{Department of Mathematics and Computer Science},
\orgname{Freie Universit\"at Berlin},
\orgaddress{\street{Arnimallee 6}, \city{Berlin}, \postcode{14195},
\country{Germany}}}

\affil*[3]{\orgdiv{Department of Numerical Mathematics, Faculty of Mathematics
and Physics}, \orgname{Charles University},
\orgaddress{\street{Sokolovsk\'a 83}, \city{Praha 8}, \postcode{18675},
\country{Czech Republic}}}

\abstract{An abstract framework is developed that enables the analysis
of algebraically stabilized discretizations in a unified way. This framework
is applied to a discretization of this kind for convection--diffusion--reaction
equations. The definition of this scheme contains a new limiter that improves
a standard one in such a way that local and global discrete maximum principles 
are satisfied on arbitrary simplicial meshes.
}

\pacs[MSC Classification]{65N12, 65N30}

\maketitle

This work has been
supported through the grant No.~19-04243S of the Czech Science Foundation.

\pagestyle{myheadings}
\thispagestyle{plain}
\markboth{V.~John, P.~Knobloch}{On algebraically stabilized schemes\dots}

\section{Introduction}
\label{s1}

The modeling of physical processes is usually performed on the basis of 
physical laws, like conservation laws. The derived model is physically 
consistent if its solutions satisfy the respective laws and, in addition, other 
important physical properties. Convection--diffusion--reaction equations, 
which will be considered in this paper, are the result of modeling 
conservation of scalar quantities, like temperature (energy balance)
or concentrations (mass balance). Besides conservation, bounds for the 
solutions of such equations can be proved (if the data satisfy certain 
conditions) that reflect physical properties, like non-negativity of 
concentrations or 
that the temperature is maximal on the boundary of the body if there are no 
heat sources and chemical processes inside the body. Such bounds 
are called maximum principles, e.g., see \cite{GT01}. A serious difficulty 
for computing numerical approximations of solutions of convection--diffusion--reaction
equations is that most proposed discretizations do not satisfy the discrete
counterpart of the maximum principles, so-called discrete maximum
principles (DMPs), and thus they are not physically consistent in this respect. 
One of the exceptions are algebraically stabilized 
finite element schemes, e.g., algebraic flux correction (AFC) schemes, 
where DMPs have been proved rigorously. Methods of this type will be studied in
this paper.

The theory developed in this paper is motivated by the numerical solution of 
the scalar steady-state convection--diffusion--reaction problem
\begin{equation}
   -\varepsilon\,\Delta u+{\bb}\cdot\nabla u+c\,u=\rhs\quad\mbox{in $\Omega$}\,,
    \qquad\qquad u=u_b\quad\mbox{on $\partial\Omega$}\,,\label{strong-steady}
\end{equation}
where $\Omega\subset{\mathbb R}^d$, $d\ge1$, is a bounded domain with a
Lipschitz-continuous boundary $\partial\Omega$ that is assumed to be polyhedral
(if $d\ge2$). Furthermore, the diffusion coefficient $\varepsilon>0$ is a 
constant and the convection field ${\bb}\in W^{1,\infty}(\Omega)^d$, the 
reaction field $c\in L^\infty(\Omega)$, the right-hand side 
$\rhs\in L^2(\Omega)$, and the Dirichlet boundary data 
$u_b\in H^{\frac{1}{2}}(\partial\Omega)\cap C(\partial\Omega)$ are given 
functions satisfying
\begin{equation}
   \nabla\cdot\bb=0\,,\qquad c\ge\sigma_0\ge0\qquad\quad\mbox{in $\Omega$}\,,
   \label{eq_ass_b_c}
\end{equation}
where $\sigma_0$ is a constant. 

In applications, one encounters typically the convection-dominated regime, 
i.e., it is $\varepsilon \ll L \|\bb\|_{0,\infty,\Omega}^{}$, where $L$ is a 
characteristic length scale of the problem and $\|\cdot\|_{0,\infty,\Omega}^{}$ 
denotes the norm in $L^\infty(\Omega)^d$. Then, a characteristic feature of 
(weak) solutions of \eqref{strong-steady}
is the appearance of layers, which are thin regions where the solution possesses a steep 
gradient. The thickness of layers is usually (much) below the resolution of the
mesh. 
It is well known that the standard Galerkin finite element method cannot cope with 
this situation and one has to utilize a so-called stabilized discretization, e.g., 
see \cite{RST08}.

Linear stabilized finite element methods that satisfy DMPs, usually with restrictions to the type
of mesh, like the upwind method from \cite{BT81}, compute in general very inaccurate results
with strongly smeared layers. In order to compute accurate solutions, a nonlinear 
method has to be applied, typically with parameters that depend on the concrete numerical 
solution. A nonlinear upwind method was proposed in \cite{MH85} and improved in \cite{Knobloch06}. In \cite{BE05}, a nonlinear edge stabilization method was proposed, 
see \cite{BE02,BH04} for related methods, for which a DMP was proved providing that 
a certain discretization parameter is chosen to be sufficiently large and the mesh is of a certain 
type. However, already the numerical results presented in \cite{BE05} show spurious 
oscillations. Our own experience from \cite{JK08} is that the nonlinear problems for 
sufficiently large parameters often cannot be solved numerically. 

A class of methods that has been developed intensively in recent years is the class of algebraically
stabilized schemes, e.g., see \cite{BB17,BJK17,GNPY14,Kuzmin06,Kuzmin07,Kuzmin09,Kuzmin12,KuzminMoeller05,KS17,KT04,LKSM17}.
The origins
of this approach can be tracked back to \cite{BorisBook73,Zalesak79}.
In these schemes, the stabilization is performed on the basis of 
the algebraic system of equations obtained with the Galerkin finite element method. 
Then, so-called limiters are computed, which maintain the conservation property and
which restrict the stabilized discretization mainly to a vicinity of layers to ensure the satisfaction 
of DMPs without compromising the accuracy. There are several limiters proposed in the literature, like the so-called 
Kuzmin~\cite{Kuzmin07}, BJK~\cite{BJK17}, or BBK~\cite{BBK17} limiters. 
Both, the Kuzmin and the BBK limiters
were utilized in \cite{BBK17a} for defining a scheme that blends a standard linear 
stabilized scheme in smooth regions and a nonlinear stabilized method in a vicinity of layers. 

An advantage of algebraically stabilized schemes is that they satisfy a DMP by
construction, often under some assumptions on the mesh, and they usually 
provide sharp approximations of layers, cf.~the numerical results in, e.g.,
\cite{ACF+11,GKT12,JS08,Kuzmin12}. In numerical 
studies presented in \cite{JJ19}, it turned out that the results with the BJK limiter 
were usually more accurate than with the Kuzmin limiter, if the nonlinear problems 
for the BJK limiter could be solved. However, solving these problems was often not 
possible for strongly convection-dominated cases. Numerical studies in \cite{BJK18} show 
that using the Kuzmin limiter leads to solutions with sharper layers compared with the 
solutions obtained with the BBK limiter.
As a consequence of these experiences, it seems to be advisable from the point 
of view of applications to use algebraically stabilized schemes on the basis of the 
Kuzmin limiter. The AFC scheme with the Kuzmin limiter was analyzed in 
\cite{BJK16}, thereby proving the existence of a solution, the satisfaction 
of a local DMP, and an error estimate. The local DMP requires
 lumping the reaction term and using
certain types of meshes,
e.g., Delaunay meshes in two dimensions, 
analogously as for the methods from \cite{BBK17,BBK17a}. 

The conservation and stability properties of algebraically stabilized schemes 
are given if the 
added stabilization is a symmetric term. For many schemes, this term consists 
of two factors, an artificial diffusion matrix and the matrix of the limiters, 
and usually the methods are constructed in such a way that both factors are 
symmetric. Only recently, motivated by \cite{BB17}, a more general approach 
where only the product is symmetric but not the individual factors was 
considered in \cite{Kno21}. 

The first main goal of this paper is the development of an abstract framework 
that allows to analyze algebraically stabilized discretizations in a unified 
way. Although our main interest is the numerical solution of problem
\eqref{strong-steady}, many considerations will be more general and then 
problem \eqref{strong-steady} and its discretizations will only serve as 
a motivation for our assumptions.
Hence, this framework covers a larger class of algebraically stabilized discretizations than the available 
analysis. 

The second main goal  consists in proposing and analyzing a modification of the 
Kuzmin limiter such that, if applied in the framework of the algebraic
stabilization of \cite{Kno21}, the positive features of the AFC method with the
Kuzmin limiter are preserved on meshes where it works well and, in addition, 
local and global DMPs can be proved on arbitrary simplicial meshes. In 
particular, our intention was to preserve the upwind character of the AFC 
method with the Kuzmin limiter. There are already proposals in this direction
in the framework of AFC methods. In \cite{Kno17}, the Kuzmin limiter is 
replaced in cases where it does not lead to the validity of the local DMP 
in a somewhat ad hoc way by a value that introduces more artificial diffusion.
The satisfaction of the local DMP on arbitrary simplicial meshes could be 
proved for this approach. Whether or not the assumption for the existence of a 
solution of the nonlinear problem is satisfied with this limiter is not 
discussed. A combination of the Kuzmin and the BJK limiters to obtain a limiter
of upwind type for which the AFC scheme satisfies a local DMP on arbitrary 
simplicial meshes and is linearity preserving is proposed in \cite{Kno19}. The 
definition of this limiter is closer to the BJK than to the Kuzmin limiter. As 
already mentioned, in \cite{Kno21}, a new algebraically stabilized method was
proposed that does not require the symmetry of the limiter. Initial numerical 
results for a nonsymmetric  modification of the Kuzmin limiter are presented 
in \cite{Kno21}, but a numerical analysis is missing. The abstract framework 
mentioned in the previous paragraph covers in particular the method 
from \cite{Kno21}.

In the present paper, the limiter from \cite{Kno21} is written in a simpler 
form, without using internodal fluxes typical for AFC methods. Moreover, a 
novel modification is performed that improves the accuracy in some computations 
using non-Delaunay meshes. Of course, this modification is performed in such a
way that the resulting method still fits in the abstract analytic framework. 
The definition of the new method does not
contain any ambiguity, in contrast to the AFC method with Kuzmin limiter, which 
is not uniquely defined in some cases (cf.~Remark~8 in \cite{BJK16}). A further 
advantage of the considered approach is that, in contrast to the AFC method 
with Kuzmin limiter, lumping of the reaction term is no longer necessary 
for the satisfaction of the DMP, which enables to obtain sharper layers as we 
will demonstrate by numerical results.

This paper is organized as follows. Sect.~\ref{s2} introduces the basic 
discretization of \eqref{strong-steady} and its algebraic form. An abstract 
framework for an algebraic stabilization is presented in Sect.~\ref{s3}. The 
following section studies the solvability and the satisfaction of local and 
global DMPs for the abstract algebraic stabilization and Sect.~\ref{s5} 
provides an error analysis. In Sect.~\ref{s6}, the AFC scheme with Kuzmin 
limiter as an example of algebraic stabilization from Sect.~\ref{s3} is 
presented, its properties are discussed for the discretizations from
Sect.~\ref{s2} and the definition of the limiter is reformulated. The 
reformulation is utilized in Sect.~\ref{s7} for proposing a new limiter such 
that the resulting algebraically stabilized scheme is of upwind type and 
satisfies DMPs on arbitrary simplicial meshes. Sect.~\ref{numerics} presents 
numerical examples which show that the algebraically stabilized scheme with the 
new limiter in fact cures the deficiencies of the AFC scheme with Kuzmin 
limiter. 

\section{The convection--diffusion--reaction problem and its finite element
discretization}
\label{s2}

The weak solution of the convection--diffusion--reaction problem 
\eqref{strong-steady} is a function $u\in H^1(\Omega)$ 
satisfying the boundary condition $u=u_b$ on $\partial\Omega$ and the
variational equation
\begin{equation*}
   a(u,v)=(\rhs,v)\qquad\forall\,\,v\in H^1_0(\Omega)\,,
\end{equation*}
where
\begin{equation}
   a(u,v)=\varepsilon\,(\nabla u,\nabla v)+({\bb}\cdot\nabla u,v)+(c\,u,v)\,.
   \label{eq_form_a}
\end{equation}
As usual, $(\cdot,\cdot)$ denotes the inner product in $L^2(\Omega)$ or 
$L^2(\Omega)^d$. It is well known that the weak solution of 
\eqref{strong-steady} exists and is unique (cf. \cite{Evans}).

An important property of problem \eqref{strong-steady} is that, for $c\ge0$ 
in $\Omega$, its solutions satisfy the maximum principle. The classical
maximum principle (cf.~\cite{Evans}) states the following: if $u\in
C^2(\Omega)\cap C(\overline\Omega)$ solves \eqref{strong-steady} and the
functions $\bb$ and $c$ are bounded in~$\Omega$, then, for any set
$G\subset\overline\Omega$, one has the implications
\begin{align}
   &\rhs\le0\quad\mbox{\rm in}\,\,\,G\qquad\Rightarrow\qquad
    \max_{\overline G}\,u\le\max_{\partial G}\,u^+\,,\label{eq_max_1a}\\
   &\rhs\ge0\quad\mbox{\rm in}\,\,\,G\qquad\Rightarrow\qquad
    \min_{\overline G}\,u\ge\min_{\partial G}\,u^-\,,\label{eq_max_1b}
\end{align}
where $u^+=\max\{u,0\}$, $u^-=\min\{u,0\}$. If, in addition, $c=0$ in $G$, then
\begin{align}
   &\rhs\le0\quad\mbox{\rm in}\,\,\,G\qquad\Rightarrow\qquad
    \max_{\overline G}\,u=\max_{\partial G}\,u\,,\label{eq_max_2a}\\
   &\rhs\ge0\quad\mbox{\rm in}\,\,\,G\qquad\Rightarrow\qquad
    \min_{\overline G}\,u=\min_{\partial G}\,u\,.\label{eq_max_2b}
\end{align}
Analogous statements also hold for the weak solutions, cf.~\cite{GT01}.

To define a finite element discretization of problem \eqref{strong-steady}, we
consider a simplicial triangulation $\TT_h$ of $\overline\Omega$ which is 
assumed to belong to a regular family of triangulations in the sense of
\cite{Ciarlet}. Furthermore, we introduce finite element spaces
\begin{displaymath}
   W_h=\{v_h\in C(\overline{\Omega})\,;\,\,v_h\vert _T^{}\in \mathbb{P}_1(T)\,\,
   \forall\, T\in \TT_h\}\,,\qquad V_h=W_h\cap H^1_0(\Omega)\,,
\end{displaymath}
consisting of continuous piecewise linear functions. The vertices of the
triangulation $\TT_h$ will be denoted by $x_1,\dots,x_N$ and we assume that
$x_1,\dots,x_M\in\Omega$ and $x_{M+1},\dots,x_N\in\partial\Omega$. Then the
usual basis functions $\varphi_1,\dots,\varphi_N$ of $W_h$ are defined by the
conditions $\varphi_i(x_j)=\delta_{ij}$, $i,j=1,\dots,N$, where $\delta_{ij}$
is the Kronecker symbol. Obviously, the functions $\varphi_1,\dots,\varphi_M$ 
form a basis of $V_h$. Any function $u_h\in W_h$ can be written in a unique way
in the form
\begin{equation}
   u_h=\sum_{i=1}^Nu_i\,\varphi_i\label{eq_vect_fcn_identification}
\end{equation}
and hence it can be identified with the coefficient vector 
${\rm U}=(u_1,\dots,u_N)$.

Now an approximate solution of problem \eqref{strong-steady} can be
introduced as the solution of the following finite-dimensional problem:

\vspace*{2mm}

Find $u_h\in W_h$ such that $u_h(x_i)=u_b(x_i)$, $i=M+1,\dots,N$, and
\begin{equation}
   a_h(u_h,v_h)=(\rhs,v_h)\qquad\forall\,\,v_h\in V_h\,,\label{v3}
\end{equation}

\vspace*{1mm}

\noindent
where $a_h$ is a bilinear form approximating the bilinear form $a$. In 
particular, one can use $a_h=a$. Another possibility is to set
\begin{equation}
   a_h(u_h,v_h)=\varepsilon\,(\nabla u_h,\nabla v_h)+({\bb}\cdot\nabla u_h,v_h)
   +\sum_{i=1}^M\,(c,\varphi_i)\,u_i\,v_i\label{eq_lumped_react}
\end{equation}
for any $u_h\in W_h$ and $v_h\in V_h$, i.e., to consider a lumping of the
reaction term $(c\,u_h,v_h)$ in $a(u_h,v_h)$. This may help to satisfy the DMP
for problem \eqref{v3}, cf.~Sect.~\ref{s6}. We 
assume that $a_h$ is elliptic on the space $V_h$, i.e., there is a constant 
$C_a>0$ such that
\begin{equation}
   a_h(v_h,v_h)\ge C_a\,\| v_h\| _a^2\qquad\forall\,\,v_h\in V_h\,,\label{v1}
\end{equation}
where $\| \cdot\| _a^{}$ is a norm on the space $H^1_0(\Omega)$ but generally 
only a seminorm on the space $H^1(\Omega)$. This guarantees that the discrete
problem \eqref{v3} has a unique solution. In view of \eqref{eq_ass_b_c},
the ellipticity condition \eqref{v1} holds for both $a_h=a$ and $a_h$ defined
by \eqref{eq_lumped_react} with $C_a=1$ and 
\begin{equation}\label{eq:cont_norm}
   \| v\| _a^2=\varepsilon\,\vert v\vert _{1,\Omega}^2+\sigma_0\,\| v\| _{0,\Omega}^2\,.
\end{equation}

We denote
\begin{alignat}{2}
   a_{ij}&=a_h(\varphi_j,\varphi_i)\,,\qquad &&i,j=1,\dots,N\,,\label{13}\\
   \rhs_i&=(\rhs,\varphi_i)\,,\qquad &&i=1,\dots,M\,,\label{14}\\
   u^b_i&=u_b(x_i)\,,\qquad &&i=M+1,\dots,N\,.\label{15}
\end{alignat}
Then $u_h$ is a solution of the finite-dimensional problem \eqref{v3} if and 
only if the coefficient vector $(u_1,\dots,u_N)$ corresponding to $u_h$ 
satisfies the algebraic problem
\begin{align*}
   &\sum_{j=1}^N\,a_{ij}\,u_j=\rhs_i\,,\qquad i=1,\dots,M\,,\\
   &u_i=u^b_i\,,\qquad i=M+1,\dots,N\,.
\end{align*}

As discussed in the introduction, the above discretizations are not appropriate 
in the convection-dominated regime and a stabilization has to be applied. In
the next sections, algebraic stabilization techniques will be studied. As 
already mentioned, a general framework will be presented and the numerical 
solution of convection--diffusion--reaction equations serves just as a 
motivation for the assumptions.

\section{An abstract framework}
\label{s3}

In this section we assume that we are given a system of linear algebraic
equations of the form
\begin{align}
   &\sum_{j=1}^N\,a_{ij}\,u_j=\rhs_i\,,\qquad i=1,\dots,M\,,\label{21}\\
   &u_i=u^b_i\,,\qquad i=M+1,\dots,N\,,\label{21b}
\end{align}
(with $0<M<N$) corresponding to a discretization of a linear boundary value
problem for which the maximum principle holds. An example is the algebraic
problem derived in the preceding section. 

We assume that the row sums of the system matrix are nonnegative, i.e.,
\begin{equation}\label{mp2}
   \sum_{j=1}^N\,a_{ij}\ge0\,,\qquad\quad i=1,\dots,M\,,
\end{equation}
and that the submatrix $(a_{ij})_{i,j=1}^M$ is positive definite, i.e.,
\begin{equation}
   \sum_{i,j=1}^M\,u_i\,a_{ij}\,u_j>0\qquad
   \forall\,\,(u_1,\dots,u_M)\in{\mathbb R}^M\setminus\{0\}\,.\label{32}
\end{equation}
For the discretizations from the previous section, the latter property follows
from \eqref{v1}, whereas \eqref{mp2} is a consequence of the nonnegativity of
$c$ and the fact that $\sum_{j=1}^N\varphi_j=1$.

Since the algebraic problem \eqref{21}, \eqref{21b} is assumed to approximate 
a problem satisfying the maximum principle, it is natural to require that an
analog of this property also holds in the discrete case, at least locally. 
Then an important physical property of the original problem will be preserved 
and spurious oscillations of the approximate solution will be excluded. To 
formulate a local DMP, we have to specify a neighborhood
\begin{displaymath}
   S_i\subset\{1,\dots,N\}\setminus\{i\}
\end{displaymath}
of any $i\in\{1,\dots,M\}$ (i.e., of any interior vertex $x_i$ if the geometric
interpretation from the previous section is considered). For example, one can
set
\begin{equation}\label{def-S1}
   S_i=\{j\in\{1,\dots,N\}\setminus\{i\}\,;\,\, a_{ij}\neq0\}\,,\qquad 
   i=1,\dots,M\,.
\end{equation}
Then, under the assumptions \eqref{mp2} and \eqref{32}, the solution of
\eqref{21}, \eqref{21b} satisfies the local DMP
\begin{equation}
   g_i\le 0\quad\Rightarrow\quad u_i\le\max_{j\in S_i}\,u_j^+\,,\qquad\qquad
   g_i\ge 0\quad\Rightarrow\quad u_i\ge\min_{j\in S_i}\,u_j^-\label{dmp1}
\end{equation}
(with any $i\in\{1,\dots,M\}$) if and only if (cf.~\cite[Lemma 21]{BJK16})
\begin{equation}
   a_{ij}\le0\qquad\forall\,\,i\neq j,\,i=1,\dots,M,\,j=1,\dots,N\,.\label{12}
\end{equation}
Moreover, the stronger local DMP
\begin{equation}
   g_i\le 0\quad\Rightarrow\quad u_i\le\max_{j\in S_i}\,u_j\,,\qquad\qquad
   g_i\ge 0\quad\Rightarrow\quad u_i\ge\min_{j\in S_i}\,u_j\label{dmp2}
\end{equation}
holds (again with any $i\in\{1,\dots,M\}$) if and only if the conditions 
\eqref{12} and
\begin{equation}\label{mp}
   \sum_{j=1}^N\,a_{ij}=0\,,\qquad\quad i=1,\dots,M\,,
\end{equation}
are satisfied (cf.~\cite[Lemma 22]{BJK16}). For the discretizations from the 
previous section, \eqref{mp} holds if $c=0$ in $\Omega$, which is a condition
used for proving \eqref{eq_max_2a} and \eqref{eq_max_2b}, i.e., a counterpart
of \eqref{dmp2}.

In many cases, the condition \eqref{12} is violated (like for the
discretizations from the previous section in the convection-dominated regime) 
and hence the local DMPs \eqref{dmp1} and \eqref{dmp2} do not hold. To enforce 
the DMP, one can add a sufficient amount of artificial diffusion to \eqref{21}, 
e.g., in the following way. First, the system matrix is extended to a matrix 
${\mathbb A}=(a_{ij})_{i,j=1}^N$, typically using the matrix corresponding to 
the underlying discretization in the case when homogeneous natural boundary 
conditions are used instead of the Dirichlet ones (i.e., using \eqref{13} if 
the setting of the previous section is considered). Then one can define a 
symmetric artificial diffusion matrix $\mathbb D=(d_{ij})_{i,j=1}^N$ possessing 
the entries
\begin{equation}\label{def-dij}
   d_{ij}=d_{ji}=-\max\{a_{ij},0,a_{ji}\}\qquad\forall\,\,i\neq j\,,\qquad\qquad
   d_{ii}=-\sum_{j\neq i}\,d_{ij}\,.
\end{equation}
The matrix $\mathbb D$ has zero row and column sums and is positive semidefinite
(cf.~\cite[Lemma 1]{BJK16}), the matrix ${\mathbb A}+{\mathbb D}$ has 
nonpositive off-diagonal entries by construction and the submatrix 
$(a_{ij}+d_{ij})_{i,j=1}^M$
is positive definite. Consequently, the stabilized algebraic problem
\begin{align}
   &\sum_{j=1}^N\,(a_{ij}+d_{ij})\,u_j=\rhs_i\,,\qquad i=1,\dots,M\,,
   \label{22}\\
   &u_i=u^b_i\,,\qquad i=M+1,\dots,N\,,\label{22b}
\end{align}
is uniquely solvable and its solution satisfies the local DMP \eqref{dmp1} with
\begin{equation}\label{ass_S_1}
   S_i=\{j\in\{1,\dots,N\}\setminus\{i\}\,;\,\,
             a_{ij}\neq0\,\,\,\mbox{or}\,\,\,a_{ji}>0\}\,,\qquad i=1,\dots,M\,.
\end{equation}
If the condition \eqref{mp} holds, then the solution of \eqref{22}, \eqref{22b} 
also satisfies the stronger local DMP \eqref{dmp2}, again with $S_i$ defined by 
\eqref{ass_S_1}. Moreover, if the above stabilization is applied to the
discretizations from the previous section, then, for weakly acute
triangulations, the approximate solutions converge to the solution of
\eqref{strong-steady}, see \cite{BJK16}.

However, the amount of artificial diffusion added in \eqref{22} is usually too
large and leads to an excessive smearing of layers if it is applied to
stabilize discretizations of \eqref{strong-steady} in the convection-dominated 
regime. To suppress the smearing, the artificial diffusion should be added 
mainly in regions where the solution changes abruptly and hence it should 
depend on the unknown approximate solution ${\rm U}=(u_1,\dots,u_N)$. This
motivates us to introduce a general artificial diffusion matrix 
${\mathbb B}({\rm U})=(b_{ij}({\rm U}))_{i,j=1}^N$ having analogous properties as the matrix
${\mathbb D}$, i.e., for any ${\rm U}\in{\mathbb R}^N$, we assume that
\begin{alignat}{2}
   &b_{ij}({\rm U})=b_{ji}({\rm U})\,,\qquad\quad&&i,j=1,\dots,N\,,
   \label{eq_b1}\\[2mm]
   &b_{ij}({\rm U})\le0,\qquad\quad&&i,j=1,\dots,N\,,\,\,i\neq j\,,
   \label{eq_b2}\\
   &\sum_{j=1}^N\,b_{ij}({\rm U})=0\,,\qquad\quad&&i=1,\dots,N\,.\label{eq_b3}
\end{alignat}
Like above, we introduce local index sets $S_i$ such that
\begin{equation}\label{eq_gen_S}
   \{j\in\{1,\dots,N\}\setminus\{i\}\,;\,\, a_{ij}\neq0\}\subset
   S_i\subset\{1,\dots,N\}\setminus\{i\}\,,\quad
   i=1,\dots,M\,,
\end{equation}
and, for any ${\rm U}\in{\mathbb R}^N$,
\begin{equation}
   b_{ij}({\rm U})=0\qquad
   \forall\,\,j\not\in S_i\cup\{i\},\,i=1,\dots,M\,.\label{eq_b4}
\end{equation}
Let us mention that if the algebraic problem \eqref{21}, \eqref{21b}
corresponds to a finite element discretization based on piecewise linear
functions as in the preceding section, one can usually use index sets
\begin{equation}\label{def_S}
   S_i=\{j\in\{1,\dots,N\}\setminus\{i\}\,;\,\,\mbox{\rm $x_i$ and $x_j$ are
         end points of the same edge}\}\,,
\end{equation}
$i=1,\dots,M$, where $x_1,\dots,x_N$ are the vertices of the underlying 
simplicial triangulation, numbered as in the preceding section.

Now, we consider the nonlinear algebraic problem
\begin{align}
   &\sum_{j=1}^N\,(a_{ij}+b_{ij}({\rm U}))\,u_j=\rhs_i\,,\qquad i=1,\dots,M\,,
   \label{23}\\
   &u_i=u^b_i\,,\qquad i=M+1,\dots,N\,.\label{23b}
\end{align}
Note that, in view of \eqref{eq_b3} and \eqref{eq_b4}, system \eqref{23}
can be written in the form
\begin{equation}
   \sum_{j=1}^N\,a_{ij}\,u_j+\sum_{j\in S_i}\,b_{ij}({\rm U})\,(u_j-u_i)
   =\rhs_i\,,\qquad i=1,\dots,M\,.\label{23c}
\end{equation}
In view of \eqref{eq_b1} and \eqref{eq_b2}, one obtains the important property
(cf.~\cite[Lemma 1]{BJK16})
\begin{equation}
   \sum_{i,j=1}^N\,v_i\,b_{ij}({\rm U})\,(v_j-v_i)
   =-\frac12\,\sum_{i,j=1}^N\,b_{ij}({\rm U})\,(v_j-v_i)^2\ge0
   \qquad\forall\,\,{\rm U},{\rm V}\in{\mathbb R}^N\,.\label{eq_pos_semidef}
\end{equation}
Thus, due to \eqref{eq_b3}, the matrix ${\mathbb B}({\rm U})$ is positive
semidefinite for any ${\rm U}\in{\mathbb R}^N$.

\section{Analysis of the abstract nonlinear algebraic problem}
\label{s4}

The aim of this section is to investigate the solvability and the validity of
the DMP for the nonlinear algebraic problem \eqref{23}, \eqref{23b}. These
investigations will generalize the results obtained in
\cite{BJK16,Kno17,BJK18}.

To prove the solvability of the system \eqref{23}, \eqref{23b}, we make the
following assumption.

\vspace*{1ex}
\noindent{\bf Assumption (A1): } For any $i\in\{1,\dots,M\}$ and any
$j\in\{1,\dots,N\}$, the function $b_{ij}({\rm U})(u_j-u_i)$ is a continuous 
function of ${\rm U}=(u_1,\dots,u_N)\in{\mathbb R}^N$ and, for any
$i\in\{1,\dots,M\}$ and any $j\in\{M+1,\dots,N\}$, the function 
$b_{ij}({\rm U})$ is a bounded function of ${\rm U}\in{\mathbb R}^N$.
\vspace*{1ex}

\begin{theorem}\label{existence}
Let \eqref{32} and \eqref{eq_b1}--\eqref{eq_b3} hold and let Assumption (A1)
be satisfied. Then there exists a solution of the nonlinear problem
\eqref{23}, \eqref{23b}.
\end{theorem}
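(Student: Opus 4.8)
The plan is to recast the problem as finding a zero of a suitable continuous map on $\mathbb{R}^M$ and then invoke the standard consequence of Brouwer's fixed-point theorem: if $P\colon\mathbb{R}^M\to\mathbb{R}^M$ is continuous and there is an $R>0$ with $P({\rm V})\cdot{\rm V}\ge0$ for all ${\rm V}$ satisfying $\vert{\rm V}\vert=R$, then $P$ vanishes at some point of the ball $\{\vert{\rm V}\vert\le R\}$. Since the components $u_{M+1},\dots,u_N$ are prescribed by \eqref{23b}, I would regard ${\rm U}=(u_1,\dots,u_M,u^b_{M+1},\dots,u^b_N)$ as the affine image of the genuine unknown ${\rm V}=(u_1,\dots,u_M)$ and define $P$ by letting $P_i({\rm V})=\sum_{j=1}^N(a_{ij}+b_{ij}({\rm U}))\,u_j-\rhs_i$ for $i=1,\dots,M$. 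A zero of $P$ is exactly a solution of \eqref{23}, \eqref{23b}.

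Continuity of $P$ is the first, easy step. Writing $P$ in the equivalent form \eqref{23c}, its $i$-th component is the affine map $\sum_{j=1}^Na_{ij}u_j-\rhs_i$ plus $\sum_{j\in S_i}b_{ij}({\rm U})(u_j-u_i)$; each summand of the latter is continuous in ${\rm U}$ by Assumption (A1), and ${\rm U}$ depends affinely, hence continuously, on ${\rm V}$, so $P$ is continuous.

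The substantive step is the coercivity estimate $P({\rm V})\cdot{\rm V}\ge c_0\,\vert{\rm V}\vert^2-C\,(1+\vert{\rm V}\vert)$ with constants $c_0>0$, $C\ge0$. Here I would split the linear part $\sum_{i=1}^M u_i\sum_{j=1}^N a_{ij}u_j$ into its interior block $\sum_{i,j=1}^M u_i a_{ij}u_j$, bounded below by $c_0\vert{\rm V}\vert^2$ thanks to the positive definiteness \eqref{32}, and a remaining interior--boundary block that is merely affine in ${\rm V}$; the right-hand side $-\sum_{i=1}^M u_i\rhs_i$ is affine as well. The hard part will be the nonlinear contribution $B=\sum_{i=1}^M u_i\sum_{j\in S_i}b_{ij}({\rm U})(u_j-u_i)$, because Assumption (A1) bounds $b_{ij}$ only for $i\le M<j$, not for interior--interior or purely boundary index pairs, and the products $b_{ij}(u_j-u_i)$ may grow with $\vert{\rm V}\vert$.

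I would control $B$ by splitting it according to whether $j\le M$ or $j>M$ and by exploiting the symmetry \eqref{eq_b1} and the sign condition \eqref{eq_b2}, rather than any boundedness. For the interior--interior pairs, filling in the zeros from \eqref{eq_b4} and performing a symmetric summation by parts turns the sum into $-\tfrac12\sum_{i,j=1}^M b_{ij}({\rm U})(u_i-u_j)^2$, which is nonnegative since $b_{ij}\le0$ off the diagonal; this is exactly the mechanism behind \eqref{eq_pos_semidef} and it disposes of the dangerous quadratic growth without any boundedness. For the interior--boundary pairs ($j>M$), I would isolate the diagonal-type part $-\sum_{i=1}^M u_i^2\sum_{j\in S_i,\,j>M}b_{ij}({\rm U})$, which is again nonnegative because $b_{ij}\le0$, leaving only a term that is linear in ${\rm V}$ with coefficients bounded by Assumption (A1). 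Crucially, this decomposition never touches the uncontrolled boundary--boundary entries. Collecting the bounds gives $B\ge-C(1+\vert{\rm V}\vert)$, hence $P({\rm V})\cdot{\rm V}\ge c_0\vert{\rm V}\vert^2-C(1+\vert{\rm V}\vert)$, which is positive on a sphere of sufficiently large radius $R$; the Brouwer corollary then yields the asserted solution.
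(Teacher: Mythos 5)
Your proposal is correct and follows essentially the same route as the paper: recast \eqref{23}, \eqref{23b} as a zero-finding problem for a continuous map on ${\mathbb R}^M$ with the boundary values frozen, establish a coercivity bound of the form $(T\widetilde{\rm V},\widetilde{\rm V})\ge C_1\,\|\widetilde{\rm V}\|^2-C_2$, and conclude via the standard corollary of Brouwer's fixed-point theorem; the paper merely defers the coercivity details to \cite{BJK16}, whereas you work them out explicitly (and your interior--interior/interior--boundary block splitting correctly uses only what Assumption (A1) and \eqref{eq_b1}--\eqref{eq_b3} provide). One cosmetic point: since \eqref{eq_b4} is not among the hypotheses of the theorem, your sums over $S_i$ should be taken over all $j\in\{1,\dots,N\}$ instead, which is harmless because the rewriting of \eqref{23} in the form \eqref{23c} with the full sum requires only the zero row sums \eqref{eq_b3}.
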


\begin{proof} The proof follows the lines of the proof of Theorem 3 in
\cite{BJK16}. We denote by $\widetilde{\rm V}\equiv(v_1,\dots,v_M)$  the
elements of the space ${\mathbb R}^M$ and, if $v_i$ with $i\in\{M+1,\dots,N\}$
occurs, we assume that $v_i=u^b_i$. To any 
$\widetilde{\rm V}\in{\mathbb R}^M$, we assign ${\rm V}:=(v_1,\dots,v_N)$.
Let us define the operator $T:{\mathbb R}^M\to{\mathbb R}^M$ by
\begin{displaymath}
   (T\,\widetilde{\rm V})_i=\sum_{j=1}^N\,a_{ij}\,v_j
   +\sum_{j=1}^N\,b_{ij}({\rm V})\,(v_j-v_i)-\rhs_i\,,\qquad i=1,\dots,M\,.
\end{displaymath}
Then ${\rm U}$ is a solution of the nonlinear problem \eqref{23}, \eqref{23b}
if and only if $T\,\widetilde{\rm U}=0$. The operator $T$ is continuous and, 
in view of \eqref{32} and \eqref{eq_pos_semidef}, there exist constants $C_1$,
$C_2>0$ such that (cf.~\cite[Theorem 3]{BJK16} for details)
\begin{displaymath}
   (T\,\widetilde{\rm V},\widetilde{\rm V})\ge C_1\,\| \widetilde{\rm V}\| ^2-C_2
   \qquad\forall\,\,\widetilde{\rm V}\in {\mathbb R}^M\,,
\end{displaymath}
where $(\cdot,\cdot)$ is the usual inner product in ${\mathbb R}^M$ and 
$\| \cdot\| $ the corresponding (Euclidean) norm.
Then, for any $\widetilde{\rm V}\in{\mathbb R}^M$ satisfying 
$\| \widetilde{\rm V}\| =\sqrt{2\,C_2/C_1}$, one has 
$(T\,\widetilde{\rm V},\widetilde{\rm V})>0$ and hence it follows from 
Brouwer's fixed-point theorem (see \cite[p.~164, Lemma 1.4]{Temam77}) that
there exists $\widetilde{\rm U}\in{\mathbb R}^M$ such that 
$T\,\widetilde{\rm U}=0$.
\end{proof}

\begin{remark}
For proving the solvability of \eqref{23}, \eqref{23b}, it would be sufficient 
to assume that the functions $b_{ij}({\rm U})u_j$ are continuous. However, 
since $b_{ij}({\rm U})$ should depend on local variations of $\rm U$ with
respect to $u_i$, the assumed continuity of $b_{ij}({\rm U})(u_j-u_i)$ is more 
useful. The functions $b_{ij}({\rm U})$ themselves are often not continuous,
cf.~Remark~\ref{remark_continuity}.
\end{remark}

\begin{remark}
The solution of \eqref{23}, \eqref{23b} is unique if 
${\mathbb B}({\rm U})\rm U$ is Lipschitz--continuous with a sufficiently small
constant. As pointed out in \cite{Lohman19}, this condition can be further
refined by introducing a positive semidefinite matrix $\mathbb D$, e.g., the
one defined in \eqref{def-dij}, and investigating the Lipschitz continuity of
$({\mathbb B}({\rm U})-\mathbb D){\rm U}$. Since, in view of \eqref{32}, there 
is $C>0$ such that
\begin{displaymath}
   C\,\| {\rm V}\| \le\| (\mathbb A+\mathbb D){\rm V}\| \qquad
   \forall\,\,{\rm V}\in{\mathbb R}^N, v_{M+1}=\dots=v_N=0\,,
\end{displaymath}
($\| \cdot\| $ is again the Euclidean norm on ${\mathbb R}^M$), the smallness
assumption on the Lipschitz constant can be expressed by the inequality
\begin{align}
   &\| ({\mathbb B}({\rm U})-\mathbb D){\rm U}
    -({\mathbb B}({\rm V})-\mathbb D){\rm V}\| 
   <\| (\mathbb A+\mathbb D)({\rm U-V})\| \nonumber\\
   &\hspace*{20mm}\forall\,\,{\rm U}\neq{\rm V}\in{\mathbb R}^N\,\,\,
   \mbox{\rm with}\,\,\,(u_{M+1},\dots,u_N)=(v_{M+1},\dots,v_N)\,.
   \label{eq_small_lipschitz}
\end{align}
Then, if ${\rm U}\neq\overline{\rm U}$ are two solutions of
\eqref{23}, \eqref{23b}, one has 
\begin{displaymath}
   [(\mathbb A+{\mathbb B}({\rm U})){\rm U}]_i
   =[(\mathbb A+{\mathbb B}(\overline{\rm U}))\overline{\rm U}]_i,\qquad
   i=1,\dots,M\,,
\end{displaymath}
and \eqref{eq_small_lipschitz} leads to a contradiction. Nevertheless, the
inequality \eqref{eq_small_lipschitz} is often not satisfied and then the
uniqueness of the nonlinear problem \eqref{23}, \eqref{23b} is open.
\end{remark}

Now let us investigate the validity of DMPs for
problem \eqref{23}, \eqref{23b}. To this end, one has to relate the properties
of the artificial diffusion matrix ${\mathbb B}({\rm U})$ to the matrix
${\mathbb A}$. This can be done in various ways and we shall use the following
assumption that generalizes the one used in \cite{Kno17}.

\vspace*{1ex}
\noindent{\bf Assumption (A2): } Consider any ${\rm U}=(u_1,\dots,u_N)\in\RR^N$ 
and any $i\in\{1,\dots,M\}$. If $u_i$ is a strict local extremum of $\rm U$ 
with respect to $S_i$ from \eqref{eq_gen_S}, \eqref{eq_b4}, i.e.,
\begin{displaymath}
   u_i>u_j\quad\forall\,\,j\in S_i\qquad\mbox{or}\qquad
   u_i<u_j\quad\forall\,\,j\in S_i\,,
\end{displaymath}
then
\begin{displaymath}
   a_{ij}+b_{ij}({\rm U})\le0\qquad\forall\,\,j\in S_i\,.
\end{displaymath}
\vspace*{1ex}

\begin{remark}
In contrast to linear problems, it is only assumed that off-diagonal entries of 
the matrix ${\mathbb A}+{\mathbb B}({\rm U})$ are nonpositive in rows
corresponding to indices where strict local extrema of $\rm U$ appear. If 
${\mathbb B}$ does not depend on $\rm U$, then Assumption (A2) implies that the
first $M$ rows of ${\mathbb A}+{\mathbb B}$ have nonpositive off-diagonal
entries, which is a necessary and sufficient condition for the validity of the
local DMP under our assumptions on ${\mathbb A}$ and ${\mathbb B}$.
\end{remark}

\begin{theorem}\label{thm_local_DMP}
Let \eqref{mp2}, \eqref{32}, and \eqref{eq_b1}--\eqref{eq_b4} hold and let
Assumption (A2) be satisfied. Then any solution 
${\rm U}=(u_1,\dots,u_N)\in\RR^N$ of \eqref{23} satisfies the local DMP
\eqref{dmp1} for all $i=1,\dots,M$. If condition \eqref{mp} holds, then the
stronger local DMP \eqref{dmp2} is also valid.
\end{theorem}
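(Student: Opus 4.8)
The plan is to argue by contradiction, reducing each implication in \eqref{dmp1} to a statement about a strict local extremum that Assumption~(A2) together with the positive definiteness \eqref{32} rules out. First I would introduce the shorthand $\tilde a_{ij}:=a_{ij}+b_{ij}({\rm U})$ and rewrite the system in a fully ``centered'' form. Centering the $a$-term in \eqref{23c} and using \eqref{eq_gen_S} (so that $a_{ij}=0$ for $j\notin S_i\cup\{i\}$), \eqref{eq_b4}, and \eqref{eq_b3}, one obtains
\begin{equation*}
   g_i=\sum_{j\in S_i}\tilde a_{ij}\,(u_j-u_i)+u_i\,r_i\,,\qquad
   r_i:=\sum_{j=1}^N a_{ij}=\sum_{j=1}^N\tilde a_{ij}\,,
\end{equation*}
where the last equality uses \eqref{eq_b3} and $r_i\ge0$ by \eqref{mp2}. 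This identity drives the whole argument.

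For the first implication of \eqref{dmp1}, suppose $g_i\le0$ but $u_i>\max_{j\in S_i}u_j^+$. Then $u_i>0$ and $u_i>u_j$ for every $j\in S_i$, so $u_i$ is a strict local maximum with respect to $S_i$, and Assumption~(A2) gives $\tilde a_{ij}\le0$ for all $j\in S_i$. In the centered identity each product $\tilde a_{ij}(u_j-u_i)$ is then nonnegative (a nonpositive coefficient times a negative difference), and $u_i\,r_i\ge0$ because $u_i>0$ and $r_i\ge0$; hence $g_i\ge0$, and combined with $g_i\le0$ this forces $g_i=0$. The $\min$ implication is the mirror image: the same reasoning applied to a strict local minimum with $u_i<0$ (equivalently, to $-{\rm U}$) gives $g_i\le0$ and hence again $g_i=0$.

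The hard part is to upgrade the non-strict conclusion $g_i\ge0$ to an actual contradiction, i.e.\ to exploit the equality case. Since $g_i=0$ forces every nonnegative summand to vanish and $u_j-u_i\ne0$ for $j\in S_i$, I would conclude $\tilde a_{ij}=0$ for all $j\in S_i$ and $u_i\,r_i=0$; as $u_i>0$ this gives $r_i=0$, whence $\tilde a_{ii}=r_i-\sum_{j\in S_i}\tilde a_{ij}=0$. This contradicts positive definiteness: \eqref{32} gives $a_{ii}>0$, while \eqref{eq_b2}--\eqref{eq_b3} give $b_{ii}=-\sum_{j\ne i}b_{ij}({\rm U})\ge0$, so $\tilde a_{ii}=a_{ii}+b_{ii}>0$. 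This contradiction establishes \eqref{dmp1}. Finally, for the stronger DMP \eqref{dmp2} under \eqref{mp} I would repeat the argument with $r_i=0$ for every $i$: then the term $u_i\,r_i$ vanishes identically, the sign condition $u_i>0$ is no longer needed, and the weaker hypothesis $u_i>\max_{j\in S_i}u_j$ (without the positive part) already produces a strict local maximum, the same chain of forced equalities, and the same contradiction $\tilde a_{ii}=0$ versus $\tilde a_{ii}>0$.
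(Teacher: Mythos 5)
Your argument is correct and is essentially the paper's own proof: you use the same centered identity (your displayed formula is exactly \eqref{8a} with $r_i=A_i$), the same contradiction hypothesis of a strict local extremum, and the same combination of Assumption (A2) with the positive definiteness \eqref{32} exploited through $a_{ii}>0$. The only variation is the closing step: the paper obtains a strict inequality directly (either $A_i\,u_i>0$, or, when $A_i=0$, some $a_{ij}<0$ with $j\in S_i$ makes one summand of \eqref{8a} strictly positive), whereas you first conclude $g_i=0$ and then let the equality case force $a_{ij}+b_{ij}({\rm U})=0$ for all $j\in S_i$ and $r_i=0$, so that the diagonal entry $a_{ii}+b_{ii}({\rm U})$ would have to vanish, contradicting $a_{ii}>0$ and $b_{ii}({\rm U})\ge0$ --- an equivalent way of spending the same hypotheses.
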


\begin{proof} 
The proof is basically the same as in \cite{Kno17}. Since it is short, we
repeat it for completeness. Let ${\rm U}=(u_1,\dots,u_N)\in\RR^N$ satisfy 
\eqref{23}. Consider any $i\in\{1,\dots,M\}$ and let $g_i\le 0$. Denoting
$A_i=\sum_{j=1}^Na_{ij}$, it follows from \eqref{23c} that
\begin{equation}\label{8a}
   A_i\,u_i+\sum_{j\in S_i}\,[a_{ij}+b_{ij}({\rm U})]\,(u_j-u_i)=\rhs_i\,.
\end{equation}
If $A_i>0$, we want to prove the first implication in \eqref{dmp1} for which
it suffices to consider $u_i>0$ since otherwise the implication trivially 
holds. If $A_i=0$, an arbitrary sign of $u_i$ is considered. Let us assume 
that $u_i>u_j$ for 
all $j\in S_i$. Then Assumption~(A2) implies that each term of the sum in 
\eqref{8a} is nonnegative. If $A_i=0$, then there is $j\in S_i$ such that 
$a_{ij}<0$ since $a_{ii}>0$ (see \eqref{32}). This together with \eqref{eq_b2}
implies that the sum in \eqref{8a} is positive. If $A_i>0$, then $A_i\,u_i>0$.
Thus, in both cases, the left-hand side of \eqref{8a} is positive, which is 
a contradiction. Therefore, there is $j\in S_i$ such that $u_i\le u_j$, which 
proves the first implication in \eqref{dmp2} and hence also in \eqref{dmp1}.
The statements for $g_i\ge0$ follow in an analogous way.
\end{proof}

Our next aim will be to show that, under the above assumptions, also a global
DMP is satisfied. First we prove the following general form of the DMP, which
generalizes a result proved in \cite{BJK18}. 

\begin{theorem}\label{thm:general_DMP}
Let \eqref{mp2}, \eqref{32}, and \eqref{eq_b1}--\eqref{eq_b4} hold and let
Assumptions (A1) and (A2) be satisfied. Consider any nonempty set 
$R\subset\{1,\dots,M\}$ and denote
\begin{equation}\label{eq_P_Q}
   P:=R\cup\bigcup_{i\in R}\,S_i\,,\qquad Q:=P\setminus R\,.
\end{equation}
Assume that $Q\neq\emptyset$. Then any solution 
${\rm U}=(u_1,\dots,u_N)\in\RR^N$ of \eqref{23} satisfies the DMP
\begin{eqnarray}
   &&g_i\le 0\quad\forall\,\,i\in R\qquad\Rightarrow\qquad 
   \max_{i\in P}u_i\le\max_{i\in Q}\,u_i^+\,,\label{dmp3a}\\
   &&g_i\ge 0\quad\forall\,\,i\in R\qquad\Rightarrow\qquad 
   \min_{i\in P}u_i\ge\min_{i\in Q}\,u_i^-\,.\label{dmp3b}
\end{eqnarray}
If, in addition, 
\begin{equation}\label{mp3}
   \sum_{j=1}^N\,a_{ij}=0\quad\,\,\quad\forall\,\,i\in R\,,
\end{equation}
then
\begin{eqnarray}
   &&g_i\le 0\quad\forall\,\,i\in R\qquad\Rightarrow\qquad 
   \max_{i\in P}u_i=\max_{i\in Q}\,u_i\,,\label{dmp4a}\\
   &&g_i\ge 0\quad\forall\,\,i\in R\qquad\Rightarrow\qquad 
   \min_{i\in P}u_i=\min_{i\in Q}\,u_i\,.\label{dmp4b}
\end{eqnarray}
\end{theorem}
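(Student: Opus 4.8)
The plan is to prove \eqref{dmp3a} in detail; \eqref{dmp3b} then follows by the mirror argument (replacing the positive part by the negative part and using the second implications of Theorem~\ref{thm_local_DMP}), and the equalities \eqref{dmp4a}, \eqref{dmp4b} by rerunning the proof under \eqref{mp3}, which makes the row sums vanish on $R$ and lets one replace $u^+$ by $u$. Assume therefore $g_i\le0$ for all $i\in R$, put $m:=\max_{i\in Q}u_i^+\ge0$ (well defined since $Q\neq\emptyset$) and $K:=\max_{i\in P}u_i$, and suppose for contradiction that $K>m$. Then $K>0$ and, since $u_j\le u_j^+\le m<K$ for $j\in Q$, the level set $W:=\{i\in P:u_i=K\}$ is a nonempty subset of $R$.

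I would test the equations \eqref{23} against the discrete positive part $w_i:=(u_i-m)^+$, $i\in R$, which is $\ge0$, vanishes on $Q$, and satisfies $w_i=K-m>0$ on $W$, so that $w\not\equiv0$. Multiplying the $i$-th equation by $w_i$ and summing over $R$ gives $\sum_{i\in R}w_i\sum_{j}c_{ij}u_j=\sum_{i\in R}w_i\,g_i\le0$, with $c_{ij}:=a_{ij}+b_{ij}({\rm U})$ and row sums $\sum_j c_{ij}=A_i\ge0$ by \eqref{mp2}. Writing $u_j=m+(u_j-m)$ and, on $R$, $(u_j-m)=w_j-(u_j-m)^-$, the left-hand side splits into $m\sum_{i\in R}w_iA_i\ge0$, the quadratic form $\sum_{i,j\in R}w_i c_{ij}w_j$, and a remainder $\mathcal R$. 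The quadratic form is bounded below by $w^{\top}{\mathbb A}_{RR}\,w>0$: indeed $\sum_{i,j\in R}w_i b_{ij}({\rm U})w_j\ge0$ because ${\mathbb B}({\rm U})$ is positive semidefinite by \eqref{eq_pos_semidef}, while ${\mathbb A}_{RR}=(a_{ij})_{i,j\in R}$ is positive definite by \eqref{32} (it is a principal submatrix, and $w\neq0$).

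The remainder is $\mathcal R=-\sum_{i,j\in R}w_i c_{ij}(u_j-m)^- + \sum_{i\in R,\,j\in Q}w_i c_{ij}(u_j-m)$, and it collects precisely the pairs $(i,j)$ with $w_i>0$ and $u_j<u_i$ (so $u_i>m\ge u_j$). Each such term is nonnegative provided $c_{ij}\le0$, and Assumption~(A2) delivers exactly this sign — but only when $u_i$ is a \emph{strict} local maximum with respect to $S_i$. The main obstacle is therefore the flat top: if $W$ is a plateau containing no strict local maximum, then (A2) is silent for $i\in W$, the entries $c_{ij}$ of the coupling from $W$ to the lower values are a priori indefinite, and $\mathcal R$ cannot be signed term by term.

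To close this gap I would show that such a sustained plateau is incompatible with \eqref{32}. For every $i\in W$ one still has the clean inequality $({\mathbb A}\,{\rm U})_i=g_i-({\mathbb B}({\rm U})\,{\rm U})_i\le0$, since $u_i=K$ maximizes $u$ over $S_i\subseteq P$ and $b_{ij}({\rm U})\le0$ force $({\mathbb B}({\rm U})\,{\rm U})_i=\sum_{j\in S_i}b_{ij}({\rm U})(u_j-u_i)\ge0$; on the other hand $\mathbf 1_W^{\top}{\mathbb A}_{WW}\mathbf 1_W>0$ by \eqref{32}. I would combine these facts with \eqref{mp2} and with the reachability furnished by Theorem~\ref{thm_local_DMP} (each $i\in W$ has a neighbour in $S_i$ again lying in $W$) to exclude a plateau that never descends to a value $<K$. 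Making this reconciliation of the indefinite coupling with positive definiteness precise is the step I expect to be the most delicate. Once $\mathcal R\ge0$ is secured, the lower bound $\sum_{i\in R}w_i({\mathbb A}\,{\rm U}+{\mathbb B}({\rm U})\,{\rm U})_i\ge w^{\top}{\mathbb A}_{RR}\,w>0$ contradicts the upper bound $\le0$, which proves \eqref{dmp3a}.
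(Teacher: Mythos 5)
Your strategy --- test \eqref{23} with the discrete truncation $w_i=(u_i-m)^+$, $m=\max_{i\in Q}u_i^+$, and play the positive definite quadratic form against a signed remainder --- is genuinely different from the paper's argument, but it has a gap that is wider than the one you acknowledge, and your proposed repair does not close it. Your remainder $\mathcal R$ needs $c_{ij}:=a_{ij}+b_{ij}({\rm U})\le0$ for \emph{every} pair with $u_i>m$ and $u_j<m$, i.e., for every row in the support of $w$, not just for rows in the top level set $W$. Any index $i$ with $m<u_i<K$ that has a strictly larger neighbour is not a local maximum of any kind, so Assumption (A2) is silent for it, and no small perturbation of ${\rm U}$ can make $u_i$ a strict local maximum; such intermediate rows are perfectly compatible with the local DMP of Theorem~\ref{thm_local_DMP}, so they cannot be ruled out, and their couplings to values below $m$ are genuinely indefinite. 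Thus the ``flat top'' is only part of the obstruction. Moreover, your proposed fix for the plateau is aimed at the wrong target: a plateau does not have to be excluded (it can genuinely occur), and the facts you list ($({\mathbb A}{\rm U})_i\le0$ on $W$, positive definiteness of ${\mathbb A}_{WW}$, reachability) at best show that $W$ must couple to strictly lower values --- they say nothing about the \emph{sign} of that coupling, which is exactly what your argument needs. The missing tool is the one ingredient your proposal never invokes: Assumption (A1). The paper proves the key sign property (its relation \eqref{off_diag}) by perturbing ${\rm U}$ to ${\rm U}^k$ with $u_i^k=u_i+1/k$ for $i$ in the top level set --- making $u_i^k$ a \emph{strict} local maximum --- applying (A2) to ${\rm U}^k$, and passing to the limit using the continuity of $b_{ij}({\rm U})(u_j-u_i)$ from (A1). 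This works only for non-strict local maxima, i.e., only on the top level set, which is precisely why the paper's proof never touches rows with intermediate values.

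For comparison, the paper's route is: with $J=\{i\in P:\,u_i=s\}$, $s=\max_{i\in P}u_i$, establish $\widetilde a_{ij}=a_{ij}+b_{ij}({\rm U})\le0$ for $i\in J\cap R$, $j\in P\setminus J$ by the perturbation-plus-(A1) argument; then use the positive definiteness \eqref{32} to produce a single row $k\in J$ with $\mu_k=\sum_{j\in J}\widetilde a_{kj}>0$ (otherwise the submatrix over $J$ would have zero row sums and be singular); finally the equation for that one row, together with \eqref{mp2} and the off-diagonal signs, forces $s\le\max_{i\in P\setminus J}u_i$, a contradiction. Your truncation idea can be saved, but only by truncating at a level $\lambda$ strictly between $\max_{i\in P\setminus W}u_i$ and $K$, so that the support of $w$ is exactly the top level set $W\subset R$, and by importing the perturbation-plus-(A1) argument to sign the couplings from $W$ downwards; at that point your quadratic-form computation does go through, but the proof has then become the paper's proof in variational clothing, since both of its essential ingredients (restriction to the top level set, and the limit argument that upgrades (A2) to non-strict maxima) are exactly the ones currently missing from your proposal.
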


\begin{proof} 
The proof is based on the technique used in \cite[Theorems 5.1 and 5.2]{Kno10}.
Let ${\rm U}=(u_1,\dots,u_N)$
satisfy \eqref{23} and let $g_i\le 0$ for all $i\in R$. We denote
\begin{displaymath}
   \widetilde{a}_{ij}:=a_{ij}+b_{ij}({\rm U})\,,\qquad
   i=1,\dots,M,\,\,j=1,\dots,N\,.
\end{displaymath}
Then, according to \eqref{eq_b3}--\eqref{eq_b4}, \eqref{mp2}, 
\eqref{eq_pos_semidef}, \eqref{32} and \eqref{23}, one has
\begin{align}
   &\sum_{j\in P}\,\widetilde{a}_{ij}=\sum_{j=1}^N\,a_{ij}\ge0\qquad\forall\,\,
   i\in R\,,\label{tsum}\\
   &\sum_{i,j=1}^M\,v_i\,\widetilde{a}_{ij}\,v_j
   \ge\sum_{i,j=1}^M\,v_i\,a_{ij}\,v_j>0\qquad
   \forall\,\,(v_1,\dots,v_M)\in{\mathbb R}^M\setminus\{0\}\,,\label{32t}\\
   &\sum_{j\in P}\,\widetilde{a}_{ij}\,u_j=g_i\qquad\forall\,\,i\in R\,.
   \label{tsyst}
\end{align}
Denote
\begin{displaymath}
   s=\max\{u_i\,;\,\,i\in P\}\,,\qquad J=\{i\in P\,;\,\,u_i=s\}\,.
\end{displaymath}
It suffices to consider the case $J\neq P$ since otherwise the validity of 
\eqref{dmp3a} and \eqref{dmp4a} is obvious. First, let us show that
\begin{equation}\label{off_diag}
   \widetilde{a}_{ij}\le0\qquad\forall\,\,i\in J\cap R,\,\,j\in P\setminus J\,.
\end{equation}
Let $i\in J\cap R$ and $j\in S_i\setminus J$. For any $k\in\mathbb N$, define 
the vector ${\rm U}^k=(u_1^k,\dots,u_N^k)$ with $u_i^k=u_i+1/k$ and $u_l^k=u_l$
for $l\neq i$. Then $u^k_i$ is a strict local maximum of ${\rm U}^k$ with 
respect to $S_i$ and hence, in view of Assumption (A2),
\begin{displaymath}
   (a_{ij}+b_{ij}({\rm U}^k))\,(u^k_i-u^k_j)\le0\,.
\end{displaymath}
Since ${\rm U}^k\to {\rm U}$ for $k\to\infty$, Assumption (A1) implies that
\begin{displaymath}
   (a_{ij}+b_{ij}({\rm U}))\,(u_i-u_j)\le0\,.
\end{displaymath}
As $u_i-u_j>0$, it follows that $a_{ij}+b_{ij}({\rm U})\le0$.
For $j\not\in S_i\cup\{i\}$, one has $a_{ij}+b_{ij}({\rm U})=0$,
which completes the proof of \eqref{off_diag}.

Now we want to prove that the relations \eqref{tsum}--\eqref{off_diag} imply
\eqref{dmp3a} and \eqref{dmp4a}. If \eqref{mp3} does not hold, it suffices to
consider $s>0$ since otherwise \eqref{dmp3a} trivially holds.
Let us assume that \eqref{dmp4a} does not hold, which
implies that $J\subset R$. We shall prove that then
\begin{equation}\label{99}
   \exists\,\,k\in J:\quad\mu_k:=\sum_{j\in J}\,\widetilde{a}_{kj}>0\,.
\end{equation}
Assume that \eqref{99} is not satisfied. Then, applying \eqref{tsum} and 
\eqref{off_diag}, one derives for any $i\in J$
\begin{displaymath}
   0\ge\sum_{j\in J}\,\widetilde{a}_{ij}
   \ge-\sum_{j\in P\setminus J}\,\widetilde{a}_{ij}\ge0\,,
\end{displaymath}
which gives
\begin{displaymath}
   \sum_{j\in J}\,\widetilde{a}_{ij}=0\qquad\forall\,\,i\in J\,.
\end{displaymath}
Thus, the matrix $(\widetilde{a}_{ij})_{i,j\in J}$ is singular, which 
contradicts \eqref{32t}. Therefore, \eqref{99} holds and hence, denoting
$r=\max\{u_i\,;\,\,i\in P\setminus J\}\,,$
one obtains using \eqref{tsyst} and \eqref{off_diag}
\begin{equation}\label{88}
   s\,\mu_k=\sum_{j\in J}\,\widetilde{a}_{kj}\,u_j
   =g_k-\sum_{j\in P\setminus J}\,\widetilde{a}_{kj}\,u_j
   \le r\,\sum_{j\in P\setminus J}\,(-\widetilde{a}_{kj})\,.
\end{equation}
If \eqref{mp3} holds, then, in view of \eqref{tsum}, the right-hand side of 
\eqref{88} equals $r\mu_k$. Hence, $s\le r$, which is a contradiction to the 
definition of $J$. If \eqref{mp3} does not hold, then it is assumed that $s>0$
and hence, in view of \eqref{off_diag}, the inequality \eqref{88} implies 
that $r>0$. Thus, in view of \eqref{tsum}, the
right-hand side of \eqref{88} is bounded by $r\mu_k$, which again implies
that $s\le r$. Therefore \eqref{dmp4a} and hence also \eqref{dmp3a} hold.

The implications \eqref{dmp3b} and \eqref{dmp4b} can be proved
analogously.
\end{proof}

\begin{remark}
Note that $P$ may contain also indices from the set $\{M+1,\dots,N\}$.
The assumption $Q\neq\emptyset$ is always satisfied if \eqref{mp3} holds since
otherwise, due to \eqref{tsum}, the matrix $(\widetilde{a}_{ij})_{i,j\in R}$
would be singular, which is not possible in view of \eqref{32t}. If $\rm U$
satisfies \eqref{23} with $g_i\le 0$ for all $i\in R$ and $\max_{i\in P}u_i>0$, 
then it was shown in the above proof that $J\not\subset R$, which again implies 
that $Q\neq\emptyset$. The same holds if $\rm U$ satisfies \eqref{23} with
$g_i\ge 0$ for all $i\in R$ and $\min_{i\in P}u_i<0$.
\end{remark}

Setting $R=\{1,\dots,M\}$ in Theorem~\ref{thm:general_DMP}, one obtains the
following global DMP.

\begin{corollary}\label{cor:global_DMP}
Let \eqref{mp2}, \eqref{32}, and \eqref{eq_b1}--\eqref{eq_b4} hold and let
Assumptions (A1) and (A2) be satisfied. 
Then any solution ${\rm U}=(u_1,\dots,u_N)\in\RR^N$ of \eqref{23} satisfies the
global DMP
\begin{eqnarray}
   &&g_i\le 0\,,\,\,\,i=1,\dots,M\qquad\Rightarrow\qquad 
   \max_{i=1,\dots,N}u_i\le\max_{i=M+1,\dots,N}\,u_i^+\,,\label{dmp5a}\\
   &&g_i\ge 0\,,\,\,\,i=1,\dots,M\qquad\Rightarrow\qquad
   \min_{i=1,\dots,N}u_i\ge\min_{i=M+1,\dots,N}\,u_i^-\,.\label{dmp5b}
\end{eqnarray}
If, in addition, the condition \eqref{mp} holds, then
\begin{eqnarray}
   &&g_i\le 0\,,\,\,\,i=1,\dots,M\qquad\Rightarrow\qquad
   \max_{i=1,\dots,N}u_i=\max_{i=M+1,\dots,N}\,u_i\,,\label{dmp6a}\\
   &&g_i\ge 0\,,\,\,\,i=1,\dots,M\qquad\Rightarrow\qquad
   \min_{i=1,\dots,N}u_i=\min_{i=M+1,\dots,N}\,u_i\,.\label{dmp6b}
\end{eqnarray}
\end{corollary}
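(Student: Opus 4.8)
The plan is to obtain Corollary~\ref{cor:global_DMP} as a direct application of Theorem~\ref{thm:general_DMP} with the particular choice $R=\{1,\dots,M\}$. First I would compute the sets $P$ and $Q$ for this choice. By definition $P=R\cup\bigcup_{i\in R}S_i$. Since $R$ already contains all interior indices $1,\dots,M$, and each $S_i\subset\{1,\dots,N\}\setminus\{i\}$, the union adds at most the boundary indices $M+1,\dots,N$; together with the elliptic nondegeneracy it is natural to expect $P=\{1,\dots,N\}$ in the generic situation, but in any case $P\subset\{1,\dots,N\}$ and $Q=P\setminus R\subset\{M+1,\dots,N\}$.

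The delicate point is matching the index sets appearing in Theorem~\ref{thm:general_DMP} (namely $P$ and $Q$) with those appearing in the global DMP \eqref{dmp5a}--\eqref{dmp6b} (namely $\{1,\dots,N\}$ and $\{M+1,\dots,N\}$). I would argue that it suffices to treat the case $P=\{1,\dots,N\}$ and $Q=\{M+1,\dots,N\}$: if some boundary index $j$ fails to lie in $P$, then $u_j$ simply does not enter the maxima on either side, and the inequality obtained on the smaller set $P$ is only strengthened when the maximum on the right is taken over the full boundary $\{M+1,\dots,N\}\supset Q$. Thus the conclusion of Theorem~\ref{thm:general_DMP} directly yields \eqref{dmp5a} and \eqref{dmp5b}.

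Before invoking the theorem I must verify its hypothesis $Q\neq\emptyset$. This is exactly the content of the Remark following Theorem~\ref{thm:general_DMP}: if $Q=\emptyset$ then $P=R$, and \eqref{tsum} would force the square matrix $(\widetilde{a}_{ij})_{i,j\in R}$ to have nonnegative row sums with nonpositive off-diagonal entries and yet be singular, contradicting the positive definiteness \eqref{32t}. I would either cite this remark or reproduce the one-line argument. The hypotheses \eqref{mp2}, \eqref{32}, \eqref{eq_b1}--\eqref{eq_b4}, and Assumptions (A1), (A2) are inherited verbatim from the corollary's statement, so nothing further is needed to apply the theorem.

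Finally, for the equalities \eqref{dmp6a} and \eqref{dmp6b} under the additional hypothesis \eqref{mp}, I observe that \eqref{mp} is precisely condition \eqref{mp3} holding for every $i\in R=\{1,\dots,M\}$, so the second part of Theorem~\ref{thm:general_DMP} applies and gives $\max_{i\in P}u_i=\max_{i\in Q}u_i$; the same replacement of $P,Q$ by the full index sets then yields \eqref{dmp6a}, and \eqref{dmp6b} follows analogously. I expect the only real subtlety to be the bookkeeping in identifying $P$ and $Q$ with the full and boundary index ranges, which is a genuine but routine verification rather than a substantive obstacle.
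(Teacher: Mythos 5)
Your overall strategy coincides with the paper's: Corollary~\ref{cor:global_DMP} is obtained precisely by setting $R=\{1,\dots,M\}$ in Theorem~\ref{thm:general_DMP}, and your treatment of the equality statements via the identification of \eqref{mp} with \eqref{mp3} is correct. However, your verification of the hypothesis $Q\neq\emptyset$ contains a genuine error. You claim that $Q=\emptyset$ (i.e., $P=R$) together with \eqref{tsum} forces the matrix $(\widetilde{a}_{ij})_{i,j\in R}$ to be singular, contradicting \eqref{32t}. That deduction is valid only when the row sums in \eqref{tsum} vanish, i.e., only under \eqref{mp3}: a matrix with merely \emph{nonnegative} row sums need not be singular (the $1\times1$ matrix $(1)$ already shows this). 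Correspondingly, the Remark after Theorem~\ref{thm:general_DMP} asserts unconditional nonemptiness of $Q$ only when \eqref{mp3} holds; without \eqref{mp3} its argument is solution-dependent ($g_i\le0$ for all $i\in R$ together with $\max_{i\in P}u_i>0$ yields $J\not\subset R$ and hence $Q\neq\emptyset$), so your appeal to the remark misstates its content. Moreover, $Q=\emptyset$ can genuinely occur under the hypotheses of the first part of the corollary: take $M=1$, $N=2$, $a_{11}=1$, $a_{12}=0$, $S_1=\emptyset$ (allowed by \eqref{eq_gen_S}), and ${\mathbb B}\equiv0$; then \eqref{mp2}, \eqref{32}, \eqref{eq_b1}--\eqref{eq_b4}, (A1), (A2) all hold, but $P=R=\{1\}$. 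Thus for \eqref{dmp5a}, \eqref{dmp5b} you cannot invoke the theorem in all cases. The repair is short: if $\max_{i\in P}u_i\le0$, then \eqref{dmp5a} is immediate, since its right-hand side is nonnegative and every index outside $P$ is a boundary index; if $\max_{i\in P}u_i>0$, the solution-dependent part of the remark guarantees $Q\neq\emptyset$ and the theorem applies.

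A second, more minor, inaccuracy: a boundary index $j\notin P$ \emph{does} enter the left-hand maximum of \eqref{dmp5a}, which runs over all of $\{1,\dots,N\}$, so it is not true that such $u_j$ ``does not enter the maxima on either side.'' The missing observation is trivial---for boundary $j$ one has $u_j\le u_j^+\le\max_{i=M+1,\dots,N}u_i^+$, and $u_j\le\max_{i=M+1,\dots,N}u_i$ for the equality statements---but it must be stated, since the theorem only bounds $\max_{i\in P}u_i$.
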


Finally, let us return to the convection--diffusion--reaction problem
\eqref{strong-steady} and assume that the algebraic problem
\eqref{21}, \eqref{21b} is defined by \eqref{13}--\eqref{15} with $a_h$ given
by \eqref{eq_form_a} or \eqref{eq_lumped_react}. Recall that a vector
${\rm U}=(u_1,\dots,u_N)$ can be identified with a function $u_h\in W_h$ via
\eqref{eq_vect_fcn_identification}. Then, for index sets $S_i$
defined by \eqref{def_S}, Theorem~\ref{thm:general_DMP} implies that finite
element functions $u_h\in W_h$ corresponding to ${\rm U}\in \mathbb R^N$ 
obeying to \eqref{23} satisfy an analog of the continuous maximum principle
\eqref{eq_max_1a}--\eqref{eq_max_2b}.

\begin{theorem}\label{thm:general_DMP2}
Let the assumptions stated in Sect.~\ref{s1} be satisfied and let the 
algebraic problem \eqref{21}, \eqref{21b} be defined by \eqref{13}--\eqref{15} 
with $a_h$ given by \eqref{eq_form_a} or \eqref{eq_lumped_react}. Let the index
sets $S_i$ be given by \eqref{def_S}. Consider a matrix ${\mathbb B}({\rm
U})\in\mathbb R^{N\times N}$ depending on ${\rm U}\in\mathbb R^N$ and
satisfying \eqref{eq_b1}--\eqref{eq_b3}, \eqref{eq_b4}, and Assumptions (A1)
and (A2). Consider any nonempty set ${\mathscr G}_h\subset \TT_h$ and define
\begin{displaymath}
   G_h=\bigcup_{T\in {\mathscr G}_h}\,T\,.
\end{displaymath}
Let ${\rm U}\in{\mathbb R}^N$ be a solution of \eqref{23} and let $u_h\in W_h$
be the corresponding finite element function given by
\eqref{eq_vect_fcn_identification}. Then one has the DMP
\begin{align}
   &\rhs\le0\quad\mbox{\rm in}\,\,\,G_h\qquad\Rightarrow\qquad
    \max_{G_h}\,u_h\le\max_{\partial G_h}\,u_h^+\,,\label{eq_dmp_1a}\\
   &\rhs\ge0\quad\mbox{\rm in}\,\,\,G_h\qquad\Rightarrow\qquad
    \min_{G_h}\,u_h\ge\min_{\partial G_h}\,u_h^-\,.\label{eq_dmp_1b}
\end{align}
If, in addition, $c=0$ in $G_h$, then
\begin{align}
   &\rhs\le0\quad\mbox{\rm in}\,\,\,G_h\qquad\Rightarrow\qquad
    \max_{G_h}\,u_h=\max_{\partial G_h}\,u_h\,,\label{eq_dmp_2a}\\
   &\rhs\ge0\quad\mbox{\rm in}\,\,\,G_h\qquad\Rightarrow\qquad
    \min_{G_h}\,u_h=\min_{\partial G_h}\,u_h\,.\label{eq_dmp_2b}
\end{align}
\end{theorem}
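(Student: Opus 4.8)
The goal is to translate the abstract, algebraic global/local DMPs already established (in particular Theorem~\ref{thm:general_DMP}) into the geometric statement \eqref{eq_dmp_1a}--\eqref{eq_dmp_2b} about the finite element function $u_h$ on a union of simplices $G_h$. The plan is to choose the index set $R$ in Theorem~\ref{thm:general_DMP} to correspond exactly to the interior degrees of freedom ``inside'' $G_h$, verify that the algebraic hypotheses transfer, and then read off the conclusion using the nodal interpretation \eqref{eq_vect_fcn_identification} of $u_h$.

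\textbf{Step 1: set up the index sets.} First I would let $R$ be the set of indices $i\in\{1,\dots,M\}$ such that the vertex $x_i$ lies in the interior of $G_h$, i.e.\ such that the whole support of $\varphi_i$ is contained in $G_h$ (equivalently, every simplex having $x_i$ as a vertex belongs to ${\mathscr G}_h$). With $S_i$ given by the edge-neighbor definition \eqref{def_S}, the set $P=R\cup\bigcup_{i\in R}S_i$ from \eqref{eq_P_Q} collects $R$ together with all vertices connected to an interior vertex by an edge; geometrically these are exactly the vertices lying in $\overline{G_h}$, while $Q=P\setminus R$ consists of the vertices of $G_h$ that are \emph{not} interior, i.e.\ those lying on $\partial G_h$. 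The key bookkeeping lemma to check here is that $\{x_i : i\in P\}$ are precisely the nodes of $\overline{G_h}$ and $\{x_i : i\in Q\}$ are precisely the nodes of $\partial G_h$; this follows from the fact that a piecewise-linear function attains its extrema over any simplex at the vertices, so the maxima and minima of $u_h$ over $G_h$ and over $\partial G_h$ are attained at nodes and agree with the corresponding discrete maxima/minima over $P$ and $Q$.

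\textbf{Step 2: transfer the sign and reaction hypotheses.} Next I would verify that the hypothesis $g_i\le0$ for all $i\in R$ in Theorem~\ref{thm:general_DMP} is implied by $g\le0$ in $G_h$. Since $g_i=(g,\varphi_i)$ by \eqref{14} and $\varphi_i\ge0$ with $\operatorname{supp}\varphi_i\subset G_h$ for $i\in R$, we get $g_i=\int_{G_h} g\,\varphi_i\le0$ immediately. Similarly, for the sharpened conclusions \eqref{eq_dmp_2a}--\eqref{eq_dmp_2b} I must check that $c=0$ in $G_h$ yields the extra condition \eqref{mp3}, $\sum_{j=1}^N a_{ij}=0$ for $i\in R$. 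This uses the partition of unity $\sum_{j=1}^N\varphi_j\equiv1$: for $a_h$ given by \eqref{eq_form_a} or by the lumped form \eqref{eq_lumped_react}, the diffusion and convection row sums vanish (the former because $\nabla 1=0$, the latter because $\nabla\cdot\bb=0$ from \eqref{eq_ass_b_c} lets one integrate by parts), and the reaction contribution to row $i$ is a multiple of $(c,\varphi_i)$ or involves $\int c\,\varphi_i$, which vanishes precisely when $c=0$ on $\operatorname{supp}\varphi_i\subset G_h$. I would also confirm that the case-$Q\neq\emptyset$ caveat is harmless: by the Remark following Theorem~\ref{thm:general_DMP}, $Q\neq\emptyset$ holds automatically whenever \eqref{mp3} holds, and otherwise one is in the nontrivial branch where the relevant extremum is positive (resp.\ negative) and the same remark guarantees $Q\neq\emptyset$; the degenerate cases reduce to trivial inequalities.

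\textbf{Step 3: read off the result; anticipated obstacle.} Having matched hypotheses, applying \eqref{dmp3a}--\eqref{dmp3b} (resp.\ \eqref{dmp4a}--\eqref{dmp4b}) gives $\max_{i\in P}u_i\le\max_{i\in Q}u_i^+$ etc., and Step~1's node identification converts these into \eqref{eq_dmp_1a}--\eqref{eq_dmp_2b}. The part I expect to require the most care is \textbf{Step 1}, the purely combinatorial-geometric claim that the edge-neighbor sets $S_i$ reproduce exactly the topological notions of $\overline{G_h}$ and $\partial G_h$, and in particular that no node of $\partial G_h$ is inadvertently swept into $R$ and that $\max_{\partial G_h}u_h$ (a continuous max over the boundary surface) really equals $\max_{i\in Q}u_i$. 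One must be careful that $\partial G_h$ is a union of simplex faces on which $u_h$ is piecewise linear, so its extrema are attained at the vertices of those faces, which are exactly the nodes indexed by $Q$; a subtlety is ensuring that an interior vertex of $G_h$ that happens to be connected by an edge to a boundary vertex does not cause $P$ to overshoot $\overline{G_h}$, which is why the regular simplicial structure and the definition of $R$ via full support containment are essential. Once this identification is nailed down, the remainder is a direct specialization of the abstract theorem.
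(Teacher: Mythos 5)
Your strategy coincides with the paper's (choose $R$ to be the indices $i\le M$ with $x_i$ in the interior of $G_h$, transfer the sign of $g_i$ and condition \eqref{mp3} via the partition of unity, then invoke Theorem~\ref{thm:general_DMP}), but the combinatorial identification on which your Steps~1 and~3 rest is false, and it is exactly the point you flagged as needing care. It is \emph{not} true that $P=R\cup\bigcup_{i\in R}S_i$ consists of all nodes of $\overline{G_h}$, nor that $Q$ consists of all nodes of $\partial G_h$: only the inclusions $P\subset P^\prime:=\{i\,;\,x_i\in G_h\}$ and $\{x_i\,;\,i\in Q\}\subset\partial G_h$ hold. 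If ${\mathscr G}_h$ contains a simplex none of whose vertices is an edge-neighbor of a vertex indexed by $R$ (say, a thin strip of simplices without interior vertices attached to a patch that does have one), then the nodes of that simplex belong to $P^\prime$ but not to $P$. Consequently your Step~3 equality $\max_{i\in P}u_i=\max_{G_h}u_h$ fails in general, and \eqref{eq_dmp_1a} does not follow from \eqref{dmp3a} alone. The repair — which is what the paper does — is to observe that every node in $P^\prime\setminus P$ necessarily lies on $\partial G_h$ (if $i\le M$ and $x_i\in\mathrm{int}\,G_h$ then $i\in R\subset P$, and no node with $i>M$ can lie in $\mathrm{int}\,G_h$ since such nodes are on $\partial\Omega$), so its value is trivially bounded by $\max_{\partial G_h}u_h\le\max_{\partial G_h}u_h^+$; combining this with \eqref{dmp3a}, which controls the nodes in $P$, yields \eqref{eq_dmp_1a}. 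Note also that you worried about the wrong failure mode: $P$ cannot ``overshoot'' $\overline{G_h}$ (that inclusion is immediate, since for $i\in R$ every simplex containing $x_i$ lies in ${\mathscr G}_h$); the danger is undershoot.

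A second, related omission: you never treat the case $R=\emptyset$, in which Theorem~\ref{thm:general_DMP} cannot be applied at all, since it requires $R$ nonempty. This case genuinely occurs (e.g., ${\mathscr G}_h$ a single simplex, so that every node of $G_h$ lies on $\partial G_h$) and must be dispatched separately, as in the paper: there, \eqref{eq_dmp_1a}--\eqref{eq_dmp_2b} follow directly from the fact that the piecewise linear function $u_h$ attains its extrema over $G_h$ at nodes, all of which lie on $\partial G_h$. Your remaining steps are essentially sound: the transfer $g\le0$ in $G_h\Rightarrow g_i\le0$ for $i\in R$ via $\mathrm{supp}\,\varphi_i\subset G_h$, the verification of \eqref{mp3} from $c=0$ in $G_h$ using $\sum_{j=1}^N\varphi_j=1$ (though no integration by parts is needed for the convective part, since $\nabla\sum_j\varphi_j=0$), and the observation that the $Q\neq\emptyset$ caveat is harmless; the paper instead settles $Q\neq\emptyset$ directly from the geometry once $R\neq\emptyset$.
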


\begin{proof}
Set
\begin{displaymath}
   R:=\{i\in\{1,\dots,M\}\,;\,\,x_i\in\mbox{\rm int}\,G_h\}\,,\qquad
   P^\prime:=\{i\in\{1,\dots,N\}\,;\,\,x_i\in G_h\}\,,
\end{displaymath}
where $\mbox{\rm int}\,G_h$ denotes the interior of $G_h$. Since $u_i=u_h(x_i)$
for any $i\in P^\prime$ and $u_h$ is piecewise linear, one has
\begin{equation}\label{51}
   \max_{G_h}\,u_h=\max_{i\in P^\prime}u_i\,,\qquad\quad
   \min_{G_h}\,u_h=\min_{i\in P^\prime}u_i\,.
\end{equation}
If $R=\emptyset$, then $x_i\in\partial G_h$ for any $i\in P^\prime$ and
\eqref{51} immediately implies the validity of the right-hand sides in the
implications \eqref{eq_dmp_1a}--\eqref{eq_dmp_2b}. Thus, assume that 
$R\neq\emptyset$. Let $P$ and $Q$ be defined by \eqref{eq_P_Q}. Then, in view 
of the definition of $S_i$, one has $P\subset P^\prime$ and $Q\neq\emptyset$. 
If $\rhs\le 0$ in $G_h$, then $g_i\le0$ for any $i\in R$ and hence
\begin{displaymath}
   \max_{i\in P}u_i\le\max_{i\in Q}\,u_i^+\le\max_{\partial G_h}\,u_h^+
\end{displaymath}
according to \eqref{dmp3a}. If $i\in P^\prime\setminus P$, then 
$x_i\in\partial G_h$ and hence
\begin{displaymath}
   u_i=u_h(x_i)\le\max_{\partial G_h}\,u_h\le\max_{\partial G_h}\,u_h^+\,.
\end{displaymath}
Consequently, \eqref{eq_dmp_1a} holds due to \eqref{51}. The implications
\eqref{eq_dmp_1b}--\eqref{eq_dmp_2b} follow analogously. Note that if $c=0$ in
$G_h$, then \eqref{mp3} holds since $\sum_{j=1}^N\varphi_j=1$.
\end{proof}

\begin{remark}
It might be surprising that the local DMP proved in Theorem~\ref{thm_local_DMP}
was not employed for proving the global DMP and instead a much more complicated
proof was considered in Theorem~\ref{thm:general_DMP}. However,
the global DMP cannot be obtained as a consequence of the local DMPs as the 
following example shows. Let $u_1,\dots,u_{16}$ be values at the
vertices of the triangulation depicted in Fig.~\ref{fig:local_to_global_DMP}
\begin{figure}[t]
\centerline{
\includegraphics[width=0.2\textwidth]{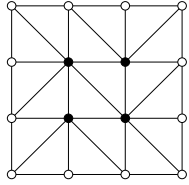}
} 
\caption{Local DMP does not imply a global DMP}\label{fig:local_to_global_DMP}
\end{figure}
numbered as in Sect.~\ref{s2}. Let $u_1=\dots=u_4=1$ (values at the black
interior vertices) and $u_5=\dots=u_{16}=0$ (values at the white boundary
vertices). Let the index sets $S_i$ be given by \eqref{def_S}. Then the local
DMP 
\begin{displaymath}
   u_i\le\max_{j\in S_i}\,u_j\,,\qquad\quad i=1,\dots,4\,,
\end{displaymath}
is satisfied but the corresponding global DMP (the right-hand sides of the 
implications \eqref{dmp5a} and \eqref{dmp6a} with $M=4$ and $N=16$) does not 
hold.
\end{remark}

\section{An error estimate}
\label{s5}

In the previous section, we analyzed the nonlinear algebraic problem
\eqref{23}, \eqref{23b} on its own, without relating it to some discretization
(except for Theorem~\ref{thm:general_DMP2}). If the algebraic problem
originates from a discretization of the convection--diffusion--reaction problem
\eqref{strong-steady}, then a natural question is how well its solution
approximates the solution $u$ of \eqref{strong-steady}. This question will be
briefly addressed in this section.

Let us assume that the algebraic problem \eqref{21}, \eqref{21b}
corresponds to the variational problem \eqref{v3} satisfying \eqref{v1}, i.e.,
it is defined by \eqref{13}--\eqref{15}.
Let $u_h\in W_h$ correspond to the solution ${\rm U}\in{\mathbb R}^N$ of the 
nonlinear algebraic problem \eqref{23}, \eqref{23b} via 
\eqref{eq_vect_fcn_identification}. Our aim is to estimate the error $u-u_h$.
To this end, it is of advantage to write the nonlinear algebraic problem in a 
variational form. We denote
\begin{displaymath}
   b_h(w;z,v)
   =\sum_{i,j=1}^N\,b_{ij}(w)\,z(x_j)\,v(x_i)
   \qquad\forall\,\,w,z,v\in C(\overline\Omega)\,,
\end{displaymath}
with $b_{ij}(w):=b_{ij}(\{w(x_i)\}_{i=1}^N)$. Then the nonlinear algebraic 
problem \eqref{23}, \eqref{23b} is equivalent to the following variational 
problem:
\vspace*{2mm}

Find $u_h\in W_h$ such that $u_h(x_i)=u_b(x_i)$, $i=M+1,\dots,N$, and
\begin{equation*}
   a_h(u_h,v_h)+b_h(u_h;u_h,v_h)=(\rhs,v_h)\qquad
   \forall\,\,v_h\in V_h\,.
\end{equation*}

In view of \eqref{eq_b1}--\eqref{eq_b3}, for any $w\in C(\overline\Omega)$, 
the mapping $b_h(w;\cdot,\cdot)$ is a nonnegative symmetric bilinear form on 
$C(\overline\Omega)\times C(\overline\Omega)$ and hence the functional 
$(b_h(w;\cdot,\cdot))^{1/2}$ is a seminorm on $C(\overline\Omega)$. Thus, for 
estimating the error $u-u_h$, it is natural to use a solution-dependent norm on
$V_h$ defined by
\begin{displaymath}
   \| v_h\| _h^{}:=\Big(C_a\,\| v_h\| _a^2+b_h(u_h;v_h,v_h)\Big)^{1/2}\,,\qquad 
   v_h\in V_h\,,
\end{displaymath}
where $C_a$ and $\| \cdot\| _a^{}$ are the same as in \eqref{v1}. Note that 
$\| \cdot\| _h^{}$ may be only a seminorm on $W_h$ and that it is not 
defined on the space $H^1(\Omega)$. Assuming that $u\in C(\overline\Omega)$ and 
using the techniques of \cite{BJK16}, one obtains the estimate
\begin{align}
   &\| u-u_h\| _h^{}\le C_a^{1/2}\,\| u-i_hu\| _a^{}
   +\sup_{v_h\in V_h}\frac{a(u,v_h)-a_h(i_hu,v_h)}{\| v_h\| _h^{}}\nonumber\\
   &\hspace*{6cm}+(b_h(u_h;i_hu,i_hu))^{1/2}\,,\label{v4}
\end{align}
where $i_h:C(\overline\Omega)\to W_h$ is the usual Lagrange interpolation
operator. The last term on the right-hand side represents an estimate of the
consistency error originating from the algebraic stabilization.

In what follows, we shall assume that either $a_h=a$ or $a_h$ is defined by
\eqref{eq_lumped_react} so that one can use the norm $\| \cdot\| _a^{}$ given by
\eqref{eq:cont_norm} and consider $C_a=1$. For simplicity, we shall assume
that $\sigma_0>0$ and refer to \cite{BJK16} for the case $\sigma_0=0$. Assuming
that $u\in H^2(\Omega)$, standard interpolation estimates (cf. \cite{Ciarlet})
give
\begin{equation}\label{72a}
  \| u-i_hu\| _a^{}\le C\,(\varepsilon+\sigma_0\,h^2)^{1/2}\,h\,\vert u\vert _{2,\Omega}^{}\,.
\end{equation}
Moreover, it was shown in \cite{BJK16} that one has
\begin{equation}
   \sup_{v_h\in V_h}\frac{a(u,v_h)-a_h(i_hu,v_h)}{\| v_h\| _h^{}}
   \le C\,(\varepsilon+\sigma_0^{-1}\,\{\| \bb\| _{0,\infty,\Omega}^2
   +\| c\| _{0,\infty,\Omega}^2\})^{1/2}\,h\,\| u\| _{2,\Omega}^{}\,.\label{34}
\end{equation}
To estimate the last term in \eqref{v4}, we assume that \eqref{eq_b4} holds
with $S_i$ defined in \eqref{def_S} for all $i=1,\dots,N$. Then it follows 
using \eqref{eq_pos_semidef} and \eqref{eq_b3} that
\begin{align*}
   b_h(u_h;i_hu,i_hu)&=-\frac12\sum_{i=1}^N\,\sum_{j\in S_i}\,b_{ij}(u_h)\,
   [u(x_i)-u(x_j)]^2\\
   &\le\sum_{T\in\TT_h}\,\sum_{x_i,x_j\in T}\,\vert b_{ij}(u_h)\vert \,[u(x_i)-u(x_j)]^2\\
   &\le\sum_{T\in\TT_h}\,\sum_{x_i,x_j\in T}\,\vert b_{ij}(u_h)\vert \,\| x_i-x_j\| ^2
   \| (\nabla i_hu)\vert _T^{}\| ^2\,,
\end{align*}
where $\| \cdot\| $ is the Euclidean norm on ${\mathbb R}^d$. Thus, using the 
shape regularity of $\TT_h$ and denoting
\begin{displaymath}
   A_h(u_h)=\max_{i,j=1,\dots,N,\,i\neq j}\,\left(\vert b_{ij}(u_h)\vert \,\| x_i-x_j\| ^{2-d}\right),
\end{displaymath}
one has
\begin{displaymath}
   b_h(u_h;i_hu,i_hu)\le C\,A_h(u_h)\,\vert i_hu\vert _{1,\Omega}^2\,.
\end{displaymath}
The behavior of $A_h(u_h)$ with respect to $h$ depends on how the artificial
diffusion matrix is constructed. Often (e.g., in the next two sections), one 
has 
\begin{equation}\label{eq:crude_est}
   \vert b_{ij}(u_h)\vert \le\max\{\vert a_{ij}\vert ,\vert a_{ji}\vert \}\qquad\forall\,\,i\neq j\,.
\end{equation}
Then (cf.~the proofs of \cite[Lemma 16]{BJK16} and \cite[Lemma 2]{BJK18})
\begin{align*}
   \vert b_{ij}(u_h)\vert \le C\,(\varepsilon+\| \bb\| _{0,\infty,\Omega}^{}\,h
   +\| c\| _{0,\infty,\Omega}^{}\,h^2)\,\| x_i-x_j\| ^{d-2}\qquad
   \forall\,\,i\neq j\,,
\end{align*}
and hence
\begin{equation}\label{est-l3}
   b_h(u_h;i_hu,i_hu)
   \le C\,(\varepsilon+\| \bb\| _{0,\infty,\Omega}^{}\,h
   +\| c\| _{0,\infty,\Omega}^{}\,h^2)\,\vert i_hu\vert _{1,\Omega}^2\,.
\end{equation}
Finally, substituting the estimates \eqref{72a}, \eqref{34}, and \eqref{est-l3}
in \eqref{v4}, one obtains the estimate
\begin{align}
   \| u-u_h\| _h^{}&\le 
   C\,(\varepsilon+\sigma_0^{-1}\,\{\| \bb\| _{0,\infty,\Omega}^2
   +\| c\| _{0,\infty,\Omega}^2\}+\sigma_0h^2)^{1/2}\,h\,\| u\| _{2,\Omega}
   \nonumber\\
   &\hspace*{16mm}+ C\,(\varepsilon+\| \bb\| _{0,\infty,\Omega}\,h
   +\| c\| _{0,\infty,\Omega}^{}\,h^2)^{1/2}\,\vert i_hu\vert _{1,\Omega}\,.
   \label{eq:error_est}
\end{align}
Note that, in all the above estimates, the constant $C$ is independent of $h$
and the data of problem \eqref{strong-steady}.

As one can see, the estimate \eqref{eq:error_est} implies the convergence order
$1/2$ in the convection-dominated case and no convergence in the 
diffusion-dominated case. It was demonstrated in \cite{BJK16} that this result 
is sharp under the above assumptions on the artificial diffusion matrix. 
However, for particular definitions of $b_{ij}$ and/or particular types of
triangulations, a better convergence behavior can be observed numerically and
in a few special cases also proved. We refer to \cite{BJK16}, \cite{BJK17}, 
and \cite{BJK18} for a refined analysis and various numerical results.

\section{Algebraic flux correction}
\label{s6}

In this section we present an example of the nonlinear algebraic problem
\eqref{23}, \eqref{23b} based on algebraic flux correction (AFC). 

A detailed derivation of an AFC scheme for problem \eqref{21}, \eqref{21b} can 
be found, e.g., in \cite{BJK16}. The idea is to add the term 
$({\mathbb D}\,{\rm U})_i$ to both sides of \eqref{21} (so that, on
the left-hand side, one has the same matrix as in the stabilized problem
\eqref{22}) and then, on the right-hand side, to use the identity
\begin{displaymath}
   ({\mathbb D}\,{\rm U})_i=\sum_{j=1}^N\,f_{ij}\qquad\mbox{with}\qquad
   f_{ij}=d_{ij}\,(u_j-u_i)
\end{displaymath}
and to limit those anti-diffusive fluxes $f_{ij}$ that would 
otherwise cause spurious oscillations. The limiting is achieved by multiplying
the fluxes by solution dependent limiters $\alpha_{ij}\in[0,1]$. This leads to 
the nonlinear algebraic problem
\begin{align}
   &\sum_{j=1}^N\,a_{ij}\,u_j
   +\sum_{j=1}^N\,(1-\alpha_{ij}({\rm U}))\,d_{ij}\,(u_j-u_i)
   =\rhs_i\,,\qquad i=1,\dots,M\,,\label{8}\\
   &u_i=u^b_i\,,\qquad i=M+1,\dots,N\,.\label{9}
\end{align}
It is assumed that
\begin{equation}
   \alpha_{ij}=\alpha_{ji}\,,\qquad i,j=1,\dots,N\,,\label{31}
\end{equation}
and that, for any $i,j\in\{1,\dots,N\}$, the function 
$\alpha_{ij}({\rm U})(u_j-u_i)$ is a continuous function of 
${\rm U}\in{\mathbb R}^N$. A theoretical analysis of the AFC scheme
\eqref{8}, \eqref{9} concerning the solvability, local DMP and
error estimation can be found in \cite{BJK16}.

The symmetry condition \eqref{31} is particularly important for several 
reasons. First, it guarantees that the resulting method is conservative.
Second, it implies that the matrix corresponding to the term arising from the 
AFC is positive semidefinite. This shows that this term really enhances the
stability of the method and enables to estimate the error of the approximate
solution, see \cite{BJK16}. Finally, it was demonstrated in \cite{BJK15} that, 
without the symmetry condition \eqref{31}, the nonlinear algebraic problem 
\eqref{8}, \eqref{9} is not solvable in general.

In view of the equivalence between \eqref{23} and \eqref{23c}, it is obvious
that \eqref{8} can be written in the form \eqref{23} with
\begin{equation}\label{afc-bij}
   b_{ij}({\rm U})=(1-\alpha_{ij}({\rm U}))\,d_{ij}\qquad\forall\,\,i\neq j\,,
   \qquad\qquad
   b_{ii}({\rm U})=-\sum_{j\neq i}\,b_{ij}({\rm U})\,.
\end{equation}
This matrix $(b_{ij}({\rm U}))_{i,j=1}^N$ satisfies the assumptions
\eqref{eq_b1}--\eqref{eq_b3} and \eqref{eq_b4} with $S_i$ defined by
\eqref{ass_S_1}.

Of course, the properties of the AFC scheme \eqref{8}, \eqref{9} significantly
depend on the choice of the limiters $\alpha_{ij}$. Here we present the Kuzmin
limiter 
proposed in \cite{Kuzmin07} which was thoroughly investigated in \cite{BJK16}
and can be considered as a standard limiter for algebraic stabilizations of 
steady-state convection--diffusion--reaction equations.

To define the limiter of \cite{Kuzmin07}, one first computes, for
$i=1,\dots,M$,
\begin{equation}\label{46}
   P_i^+=\sum_{\mbox{\parbox{8mm}{\scriptsize\centerline{$j{=}1$}
   \centerline{$a_{ji}\le a_{ij}$}}}}^N\,f_{ij}^+\,,\quad\,\,\,
   P_i^-=\sum_{\mbox{\parbox{8mm}{\scriptsize\centerline{$j{=}1$}
   \centerline{$a_{ji}\le a_{ij}$}}}}^N\,f_{ij}^-\,,\quad\,\,\,
   Q_i^+=-\sum_{j=1}^N\,f_{ij}^-\,,\quad\,\,\,
   Q_i^-=-\sum_{j=1}^N\,f_{ij}^+\,,
\end{equation}
where $f_{ij}=d_{ij}\,(u_j-u_i)$,
$f_{ij}^+=\max\{0,f_{ij}\}$, and $f_{ij}^-=\min\{0,f_{ij}\}$. Then, one defines
\begin{equation}\label{R-definition}
   R_i^+=\min\left\{1,\frac{Q_i^+}{P_i^+}\right\},\quad
   R_i^-=\min\left\{1,\frac{Q_i^-}{P_i^-}\right\},\qquad
   i=1,\dots,M\,.
\end{equation}
If $P_i^+$ or $P_i^-$ vanishes, one sets $R_i^+=1$ or $R_i^-=1$, respectively.
For $i=M+1,\dots,N$, one defines $R_i^+=R_i^-=1$. Furthermore, one sets
\begin{equation}\label{alpha-definition}
        \widetilde\alpha_{ij}=\left\{
        \begin{array}{cl}
                R_i^+\quad&\mbox{if}\,\,\,f_{ij}>0\,,\\
                1\quad&\mbox{if}\,\,\,f_{ij}=0\,,\\
                R_i^-\quad&\mbox{if}\,\,\,f_{ij}<0\,,
        \end{array}\right.\qquad\qquad
        i,j=1,\dots,N\,.
\end{equation}
Finally, one defines
\begin{equation}\label{symm_alpha}
   \alpha_{ij}=\alpha_{ji}=\widetilde\alpha_{ij}\qquad\mbox{if}\quad
   a_{ji}\le a_{ij}\,,\qquad i,j=1,\dots,N\,.
\end{equation}

It was proved in \cite{BJK16} that the AFC scheme \eqref{8}, \eqref{9} with
the above limiter satisfies the local DMP \eqref{dmp1} with $S_i$ defined by
\eqref{def-S1} provided that
\begin{equation}\label{a2}
   a_{ij}+a_{ji}\le0\qquad\forall\,\,
   i,j=1,\dots,N\,,\,\, i\neq j\,,\,\,i\le M\,\,\mbox{or}\,\,j\le M\,.
\end{equation}
The local DMP \eqref{dmp2} holds under the additional condition \eqref{mp}.
In \cite{Kno17}, it was proved that the assumption \eqref{a2} can be weakened 
to
\begin{equation}\label{assumption_min}
   \min\{a_{ij},a_{ji}\}\le0\qquad
   \forall\,\,i=1,\dots,M\,,\,\,j=1,\dots,N\,,\,\,i\neq j\,.
\end{equation}
Then the local DMP \eqref{dmp1} holds with $S_i$ defined by \eqref{ass_S_1}
and, if \eqref{mp} is satisfied, then again also the local DMP \eqref{dmp2} is 
valid.

If the AFC scheme \eqref{8}, \eqref{9} is applied to the algebraic problem
\eqref{21}, \eqref{21b} defined by \eqref{13}--\eqref{15} with $a_h$ given by 
\eqref{eq_lumped_react}, then, as discussed in \cite{BJK16}, the validity of 
\eqref{a2} is guaranteed if the triangulation $\TT_h$ is weakly acute, i.e., 
if the angles between facets of $\TT_h$ do not exceed $\pi/2$. In the 
two-dimensional case, \eqref{a2} holds if and (in principle) only if $\TT_h$ 
is a Delaunay triangulation, i.e., the sum of any pair of angles opposite a
common edge is smaller than, or equal to, $\pi$ (the note `in principle' is
added because angles opposite interior edges having both end points on the
boundary of $\Omega$ can be arbitrary). The condition 
\eqref{assumption_min} may be satisfied also for non-Delaunay triangulations,
particularly, in the convection-dominated case, since the convection matrix is
skew-symmetric. However, in general, the validity of a DMP cannot be guaranteed 
for non-Delaunay triangulations. Moreover, if the lumped bilinear form 
\eqref{eq_lumped_react} is replaced
by the original bilinear form \eqref{eq_form_a}, then the validity of the 
conditions \eqref{a2} or \eqref{assumption_min} may be lost since some
off-diagonal entries of the matrix corresponding to the reaction term from 
\eqref{eq_form_a} are positive.

It was shown in \cite{Kno17} that the DMP generally does not hold if 
condition \eqref{assumption_min} is not satisfied. This is due to the condition 
$a_{ji}\le a_{ij}$ used in \eqref{symm_alpha} to symmetrize the factors 
$\widetilde\alpha_{ij}$. It suffices to study this condition for $i\le M$ or 
$j\le M$ since $\alpha_{ij}$ with $i,j\in\{M+1,\dots,N\}$ does not occur in 
\eqref{8}. Then, if the discretizations from Sect.~\ref{s2} are considered, 
the symmetry of the bilinear forms corresponding to the diffusion and reaction
terms implies that the condition $a_{ji}<a_{ij}$ is equivalent to the 
inequality
\begin{equation*}
   ({\bb}\cdot\nabla\varphi_j,\varphi_i)>0\,.
\end{equation*}
As it was discussed in \cite{Kno17}, in many cases (depending on $\bb$ and the
geometry of the triangulation), this inequality means that the vertex $x_i$
lies in the upwind direction with respect to the vertex $x_j$. Consequently,
the use of the inequality $a_{ji}<a_{ij}$ in the definition of the above 
limiter causes that $\alpha_{ij}=\alpha_{ji}$ is defined using quantities 
computed at the upwind vertex of the edge with end points $x_i$, $x_j$. 
It turns out that this feature has a positive 
influence on the quality of the approximate solutions and on the convergence of 
the iterative process for solving the nonlinear problem \eqref{8}, \eqref{9}. 

In order to obtain a method satisfying the DMP on arbitrary meshes and
preserving the upwind feature described above, modifications of
$\alpha_{ij}=\alpha_{ji}$ were considered in \cite{Kno17,Kno19} if 
$\min\{a_{ij},a_{ji}\}>0$. In the present paper, we shall achieve this goal by
changing the definition of the matrix ${\mathbb B}({\rm U})$ in 
\eqref{afc-bij}. First, however, we shall derive an equivalent form of the
above limiter under the assumption \eqref{assumption_min}. Note that, without 
this assumption, the application of the limiter does not make much sense since 
the main goal of the AFC, i.e., the validity of the DMP, is not achieved in
general. Moreover, if \eqref{assumption_min} does not hold, the AFC scheme is 
not uniquely defined because the symmetrization \eqref{symm_alpha} is ambiguous 
if $a_{ij}=a_{ji}$. If \eqref{assumption_min} holds, this ambiguity does not
influence the resulting method since $d_{ij}=0$ for $a_{ij}=a_{ji}$ and hence
the respective $\alpha_{ij}=\alpha_{ji}$ does not occur in the nonlinear 
problem \eqref{8}, \eqref{9} and can be defined arbitrarily.

Thus, let us assume that \eqref{assumption_min} holds. Then, for any
$i\in\{1,\dots,M\}$ and $j\in\{1,\dots,N\}$ with $i\neq j$, one has the 
equivalence
\begin{equation*}
   a_{ji}\le a_{ij}\quad\mbox{and}\quad d_{ij}\neq0\qquad\Leftrightarrow\qquad
   a_{ij}>0\,.
\end{equation*}
Moreover, if $a_{ij}>0$, then $d_{ij}=-a_{ij}$. Therefore, it follows from 
\eqref{46} that
\begin{equation}\label{46a}
   P_i^+=\sum_{\mbox{\parbox{8mm}{\scriptsize\centerline{$j{=}1$}
   \centerline{$a_{ij}>0$}}}}^N\,a_{ij}\,(u_i-u_j)^+\,,\qquad\quad
   P_i^-=\sum_{\mbox{\parbox{8mm}{\scriptsize\centerline{$j{=}1$}
   \centerline{$a_{ij}>0$}}}}^N\,a_{ij}\,(u_i-u_j)^-\,.
\end{equation}
Furthermore, we shall rewrite the formulas for $Q_i^\pm$ and
$\widetilde\alpha_{ij}$. For this, the validity of \eqref{assumption_min} will
not be needed. Since, for any real number $a$, its positive and negative parts
satisfy $-a^-=(-a)^+$ and $-a^+=(-a)^-$, one has
\begin{equation}\label{46b}
   Q_i^+=\sum_{j=1}^N\,\vert d_{ij}\vert \,(u_j-u_i)^+\,,\qquad\quad
   Q_i^-=\sum_{j=1}^N\,\vert d_{ij}\vert \,(u_j-u_i)^-\,.
\end{equation}
If $d_{ij}\neq0$, then
\begin{equation}\label{alpha-definition_new}
        \widetilde\alpha_{ij}=\left\{
        \begin{array}{cl}
                R_i^+\quad&\mbox{if}\,\,\,u_i>u_j\,,\\
                1\quad&\mbox{if}\,\,\,u_i=u_j\,,\\
                R_i^-\quad&\mbox{if}\,\,\,u_i<u_j\,.
        \end{array}\right.\qquad\qquad
\end{equation}
If $d_{ij}=0$, then \eqref{alpha-definition_new} generally gives another value
than \eqref{alpha-definition} but since $\alpha_{ij}$ is multiplied by
$d_{ij}$ in \eqref{8}, the use of \eqref{alpha-definition_new} does not
change the AFC scheme. Thus, if the condition \eqref{assumption_min} is
satisfied, then defining the limiter $\alpha_{ij}$ in the AFC scheme
\eqref{8}, \eqref{9} by \eqref{46a}, \eqref{46b}, \eqref{R-definition},
\eqref{alpha-definition_new}, and \eqref{symm_alpha} is equivalent to using 
\eqref{46}--\eqref{symm_alpha}.

\section{A new algebraically stabilized scheme}
\label{s7}

As discussed in the preceding section, the symmetrization \eqref{symm_alpha}
of the limiter causes that the DMP does not hold for the AFC scheme
\eqref{8}, \eqref{9} in general. In this section we modify the AFC scheme in
such a way that the symmetry of the limiter will not be needed and the DMP will
be always satisfied.

To make the formulas clearer, we denote
\begin{equation}\label{beta-alpha}
   \beta_{ij}=1-\alpha_{ij}\,.
\end{equation}
As we know, the AFC scheme \eqref{8}, \eqref{9} can be written in the form 
\eqref{23}, \eqref{23b} with the artificial diffusion matrix 
${\mathbb B}({\rm U})=(b_{ij}({\rm U}))_{i,j=1}^N$ given in \eqref{afc-bij}. 
In view of \eqref{def-dij} and \eqref{31}, one observes that the off-diagonal 
entries of this matrix satisfy
\begin{equation*}
   b_{ij}({\rm U})=-\beta_{ij}({\rm U})\max\{a_{ij},0,a_{ji}\}
   =-\max\{\beta_{ij}({\rm U})\,a_{ij},0,\beta_{ji}({\rm U})\,a_{ji}\}\,.
\end{equation*}
This motivates us to define the artificial diffusion matrix by
\begin{align}
   b_{ij}({\rm U})
   &=-\max\{\beta_{ij}({\rm U})\,a_{ij},0,\beta_{ji}({\rm U})\,a_{ji}\}\,,\qquad
   i,j=1,\dots,N\,,\,\,i\neq j\,,\label{asm-bij}\\
   b_{ii}({\rm U})
   &=-\sum_{\mbox{\parbox{4mm}{\scriptsize\centerline{$j{=}1$}
   \centerline{$j{\neq}i$}}}}^N\,b_{ij}({\rm U})\,,\qquad i=1,\dots,N\,.
   \label{asm-bii}
\end{align}
Obviously, this matrix $(b_{ij}({\rm U}))_{i,j=1}^N$ again satisfies the 
assumptions \eqref{eq_b1}--\eqref{eq_b3} and \eqref{eq_b4} with $S_i$ defined 
by \eqref{ass_S_1}. Note however that, in contrast to \eqref{afc-bij}, the 
formula \eqref{asm-bij} leads to a symmetric matrix ${\mathbb B}({\rm U})$ 
also if the limiters $\alpha_{ij}$ are not symmetric. This enables us to get 
rid of the symmetry condition \eqref{31}.

Thus, we shall consider the algebraic problem \eqref{23}, \eqref{23b} with the
artificial diffusion matrix given by \eqref{asm-bij} and \eqref{asm-bii} and
with any functions $\beta_{ij}$ satisfying, for any $i,j\in\{1,\dots,N\}$,
\begin{align}
   &\beta_{ij}\,:\,{\mathbb R}^N\to[0,1]\,,\label{asm_beta1}\\
   &\mbox{if $a_{ij}>0$, then $\beta_{ij}({\rm U})(u_j-u_i)$ is a continuous 
          function of ${\rm U}\in{\mathbb R}^N$}\,.\label{asm_beta2}
\end{align}
No other assumptions on $\beta_{ij}$ will be made in the general case.

First let us state an existence result.

\begin{theorem}\label{asm_existence}
Let \eqref{32} hold and let the matrix $(b_{ij}({\rm U}))_{i,j=1}^N$ be 
defined by \eqref{asm-bij} and \eqref{asm-bii} with functions $\beta_{ij}$ 
satisfying \eqref{asm_beta1} and \eqref{asm_beta2} for any 
$i,j\in\{1,\dots,N\}$. Then Assumption~(A1) is satisfied and the nonlinear 
algebraic problem \eqref{23}, \eqref{23b} has a solution.
\end{theorem}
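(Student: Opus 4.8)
The plan is to reduce Theorem~\ref{asm_existence} to the already-proved existence result, Theorem~\ref{existence}. That theorem guarantees a solution of \eqref{23}, \eqref{23b} provided \eqref{32}, the structural conditions \eqref{eq_b1}--\eqref{eq_b3}, and Assumption~(A1) all hold. The hypothesis \eqref{32} is assumed directly, and it was already observed in the text immediately following \eqref{asm-bii} that the matrix defined by \eqref{asm-bij}, \eqref{asm-bii} satisfies \eqref{eq_b1}--\eqref{eq_b3} (and \eqref{eq_b4}). Hence the whole proof comes down to verifying Assumption~(A1): that $b_{ij}({\rm U})(u_j-u_i)$ is continuous in ${\rm U}$ for all $i\in\{1,\dots,M\}$, $j\in\{1,\dots,N\}$, and that $b_{ij}({\rm U})$ is bounded for $i\in\{1,\dots,M\}$, $j\in\{M+1,\dots,N\}$. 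Once (A1) is in hand, Theorem~\ref{existence} applies verbatim and yields the desired solution.

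The verification of continuity is the crux, and it is where I expect the only real subtlety. The natural approach is to write
\begin{displaymath}
   b_{ij}({\rm U})(u_j-u_i)
   =-\max\{\beta_{ij}({\rm U})\,a_{ij},\,0,\,\beta_{ji}({\rm U})\,a_{ji}\}\,(u_j-u_i)
\end{displaymath}
and analyze it according to the signs of $a_{ij}$ and $a_{ji}$. Since $\max$ is a continuous function of its arguments, it suffices that each argument $\beta_{ij}({\rm U})\,a_{ij}$ and $\beta_{ji}({\rm U})\,a_{ji}$, when multiplied by $(u_j-u_i)$, be continuous. The hypothesis \eqref{asm_beta2} gives continuity of $\beta_{ij}({\rm U})(u_j-u_i)$ precisely when $a_{ij}>0$; by symmetry of the roles, continuity of $\beta_{ji}({\rm U})(u_j-u_i)$ holds when $a_{ji}>0$. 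When $a_{ij}\le0$ the term $\beta_{ij}({\rm U})\,a_{ij}$ is nonpositive, so it can never be the (strict) maximand over $0$ in a way that survives the $\max$ against $0$; more carefully, because $\beta_{ij}\in[0,1]$ by \eqref{asm_beta1}, the argument $\beta_{ij}({\rm U})\,a_{ij}\le0$ whenever $a_{ij}\le0$, and likewise for the $ji$ term, so only the strictly positive coefficients actually contribute to the maximum. The key observation is therefore that the $\max$ effectively selects among $0$ and those arguments with a positive coefficient $a_{ij}$ or $a_{ji}$, and for exactly those arguments \eqref{asm_beta2} supplies the needed continuity after multiplication by $(u_j-u_i)$.

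Care must be taken to multiply the factor $(u_j-u_i)$ into the $\max$ correctly, since $(u_j-u_i)$ can be negative and thus can flip a $\max$ into a $\min$. The clean way is to treat $b_{ij}({\rm U})$ as a continuous map into the reals wherever the coefficients are positive and to use the elementary identities $t\max\{a,b,c\}=\max\{ta,tb,tc\}$ for $t\ge0$ and $=\min\{ta,tb,tc\}$ for $t\le0$; in either case the outcome is a composition of continuous functions with the continuous quantities $\beta_{ij}({\rm U})(u_j-u_i)$ and $\beta_{ji}({\rm U})(u_j-u_i)$ supplied by \eqref{asm_beta2}, together with the constant coefficients $a_{ij}$, $a_{ji}$. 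This handles the continuity requirement of (A1). The boundedness requirement for $j\in\{M+1,\dots,N\}$ is immediate and routine: from \eqref{asm-bij} and $\beta_{ij}\in[0,1]$ one has $|b_{ij}({\rm U})|\le\max\{|a_{ij}|,|a_{ji}|\}$, a constant independent of ${\rm U}$. With (A1) established, invoking Theorem~\ref{existence} completes the argument; the main obstacle, as noted, is purely the sign bookkeeping inside the $\max$, not any deeper analytic difficulty.
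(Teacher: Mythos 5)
Your proposal is correct and follows essentially the same route as the paper's proof: reduce to Theorem~\ref{existence}, get boundedness of $b_{ij}$ directly from \eqref{asm_beta1}, and prove continuity of $b_{ij}({\rm U})(u_j-u_i)$ by a sign analysis of $a_{ij},a_{ji}$ that only ever uses the continuity of the products supplied by \eqref{asm_beta2} --- the sole (cosmetic) difference being that the paper treats the delicate points where $\bar u_i=\bar u_j$ via the explicit bound \eqref{ui_is_uj}, whereas you glue the $\max$/$\min$ representations across the closed regions $u_j\ge u_i$ and $u_j\le u_i$, which amounts to the same thing. One caveat: your phrase ``treat $b_{ij}({\rm U})$ as a continuous map \dots wherever the coefficients are positive'' is not literally true, since $\beta_{ij}$, and hence $b_{ij}$ itself, is in general discontinuous (cf.\ Remark~\ref{remark_continuity}); fortunately your argument never actually relies on it, only on continuity of $\beta_{ij}({\rm U})(u_j-u_i)$ and $\beta_{ji}({\rm U})(u_i-u_j)$, so nothing breaks.
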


\begin{proof}
In view of Theorem \ref{existence}, it suffices to verify the validity of
Assumption~(A1). Consider any $i,j\in\{1,\dots,N\}$ with $i\neq j$. Due to 
\eqref{asm_beta1}, it is obvious that $b_{ij}({\rm U})$ is bounded on 
${\mathbb R}^N$ and it remains to show the continuity of 
$\Phi({\rm U}):=b_{ij}({\rm U})(u_j-u_i)$. Due to the definition of
$b_{ij}({\rm U})$, this is particularly easy if $a_{ij}\le0$ or $a_{ji}\le0$
since $\Phi({\rm U})\equiv0$ if both $a_{ij}$ and $a_{ji}$ are nonpositive and
otherwise the continuity of $\Phi({\rm U})$ immediately follows from
\eqref{asm_beta2}. Thus, let $a_{ij}>0$
and $a_{ji}>0$. Choose any $\bar{\rm U}=(\bar{u}_1,\dots,\bar{u}_N)\in\RR^N$ 
and let us show that $\Phi$ is continuous at the point $\bar{\rm U}$. If 
$\bar{u}_i=\bar{u}_j$, then $\Phi(\bar{\rm U})=0$ and the continuity at 
$\bar{\rm U}$ follows from the estimates
\begin{equation}\label{ui_is_uj}
   \vert \Phi({\rm U})-\Phi(\bar{\rm U})\vert =\vert \Phi({\rm U})\vert 
   \le C\,\vert u_i-u_j\vert \le C\,\sqrt2\,\| {\rm U}-\bar{\rm U}\| \,,
\end{equation}
where $\| \cdot\| $ is the Euclidean norm on ${\mathbb R}^N$. Thus, let
$\bar{u}_i\neq\bar{u}_j$. Without loss of generality, one can assume that
$\bar{u}_i>\bar{u}_j$. Then, if ${\rm U}\in {\mathbb R}^N$ satisfies 
$\| {\rm U}-\bar{\rm U}\| \le\frac12\vert \bar{u}_i-\bar{u}_j\vert $, one has $u_i>u_j$
and hence 
\begin{equation*}
   \Phi({\rm U})=\max\{\beta_{ij}({\rm U})\,(u_i-u_j)\,a_{ij},
                       \beta_{ji}({\rm U})\,(u_i-u_j)\,a_{ji}\}\,.
\end{equation*}
Since the maximum of two continuous functions is continuous, it follows from 
\eqref{asm_beta2} that $\Phi$ is continuous in a neighborhood of 
$\bar{\rm U}$, which completes the proof.
\end{proof}

If the functions $\beta_{ij}$ form a symmetric matrix and $\alpha_{ij}$ satisfy
\eqref{beta-alpha}, then the matrix ${\mathbb B}({\rm U})$ defined by
\eqref{asm-bij}, \eqref{asm-bii} satisfies \eqref{afc-bij} and method
\eqref{23}, \eqref{23b} can be written in the form \eqref{8}, \eqref{9}. Hence,
in this case, the AFC scheme is recovered.

Another interesting observation can be made if condition 
\eqref{assumption_min} is satisfied. Consider any $i\in\{1,\dots,M\}$ and 
$j\in\{1,\dots,N\}$ with $i\neq j$. Then, if $a_{ij}>0$, one has $a_{ji}\le0$
and hence $b_{ij}({\rm U})=-\beta_{ij}({\rm U})\,a_{ij}
=\beta_{ij}({\rm U})\,d_{ij}$. Similarly, if $a_{ji}>0$, then $a_{ij}\le0$
and hence $b_{ij}({\rm U})=-\beta_{ji}({\rm U})\,a_{ji}
=\beta_{ji}({\rm U})\,d_{ij}$. If both $a_{ij}\le0$ and $a_{ji}\le0$, then
$b_{ij}({\rm U})=0$ and $d_{ij}=0$. Thus, one concludes that
\begin{equation*}
        b_{ij}({\rm U})=\left\{
        \begin{array}{cl}
         \beta_{ij}({\rm U})\,d_{ij}\quad&\mbox{if}\,\,\,a_{ji}\le a_{ij}\,,\\
         \beta_{ji}({\rm U})\,d_{ij}\quad&\mbox{otherwise}\,,
        \end{array}\right.
\end{equation*}
for $i=1,\dots,M$ and $j=1,\dots,N$ with $i\neq j$.
Thus, if \eqref{assumption_min} holds, then the definition \eqref{asm-bij}
implicitly comprises the favorable upwind feature discussed in the preceding
section and the method \eqref{23}, \eqref{23b} can be again written in the form 
of the AFC scheme \eqref{8}, \eqref{9}. Moreover, if one sets
\begin{equation}\label{beta-talpha}
   \beta_{ij}=1-\widetilde\alpha_{ij}\,,
\end{equation}
then one obtains the AFC scheme \eqref{8}, \eqref{9} with limiters
$\alpha_{ij}$ defined by \eqref{symm_alpha}. Consequently, if the condition
\eqref{assumption_min} holds, then the AFC scheme \eqref{8}, \eqref{9} with 
limiters $\alpha_{ij}$ defined by \eqref{46}--\eqref{symm_alpha}
is equivalent to the system \eqref{23}, \eqref{23b} with ${\mathbb B}({\rm U})$ 
defined by \eqref{asm-bij}, \eqref{asm-bii}, and \eqref{beta-talpha} with 
$\widetilde\alpha_{ij}$ given by \eqref{46a}, \eqref{46b}, 
\eqref{R-definition}, and \eqref{alpha-definition_new}. Therefore, this new 
method preserves the advantages of the AFC scheme from the preceding
section which are available under condition \eqref{assumption_min}.
However, in contrast to the method from the preceding section, we shall see
that the new method satisfies the DMP also if condition
\eqref{assumption_min} is not satisfied.

For the convenience of the reader, we first summarize the definition of
$\beta_{ij}$ in the new method. We shall make a slight change in \eqref{46b}
and replace $\vert d_{ij}\vert=\max\{a_{ij},0,a_{ji}\}$ by
\begin{equation}\label{eq:qij}
   q_{ij}=\max\{\vert a_{ij}\vert,a_{ji}\}\,,
\end{equation}
which is larger or equal to $\vert d_{ij}\vert$. This heuristic modification
may improve the accuracy and convergence behavior in the
diffusion-dominated case when the method is applied to the discretizations from
Sect.~\ref{s2} and non-Delaunay meshes are used, see the discussion
in Sect.~\ref{numerics}. One could also consider the symmetric
variant $\max\{\vert a_{ij}\vert,\vert a_{ji}\vert\}$ which often leads to 
very similar results as \eqref{eq:qij}, however, in a few cases, we observed
that \eqref{eq:qij} is more convenient from the point of view of both the 
quality of the solution and the convergence of the solver used to solve the
nonlinear discrete problem.
Thus, the final definition of $\beta_{ij}$ is as 
follows. For any $i\in\{1,\dots,M\}$, set
\begin{alignat}{2}
   &P_i^+=\sum_{\mbox{\parbox{8mm}{\scriptsize\centerline{$j{=}1$}
   \centerline{$a_{ij}>0$}}}}^N\,a_{ij}\,(u_i-u_j)^+\,,\qquad\quad
   &&P_i^-=\sum_{\mbox{\parbox{8mm}{\scriptsize\centerline{$j{=}1$}
   \centerline{$a_{ij}>0$}}}}^N\,a_{ij}\,(u_i-u_j)^-\,,\label{def-beta_ij_1}
   \\[1mm]
   &Q_i^+=\sum_{j=1}^N\,q_{ij}\,(u_j-u_i)^+\,,\qquad\quad
   &&Q_i^-=\sum_{j=1}^N\,q_{ij}\,(u_j-u_i)^-\,,\label{def-beta_ij_2}\\[2mm]
   &R_i^+=\min\left\{1,\frac{Q_i^+}{P_i^+}\right\}\,,
   &&R_i^-=\min\left\{1,\frac{Q_i^-}{P_i^-}\right\}\,,\label{def-beta_ij_3}
\end{alignat}
where $q_{ij}$ is defined by \eqref{eq:qij}. Furthermore, set
\begin{equation}\label{def-beta_ij_4}
   R_i^+=1\,,\qquad R_i^-=1\,,\qquad\quad i=M+1,\dots,N\,.
\end{equation}
Then define
\begin{equation}\label{def-beta_ij_5}
        \beta_{ij}=\left\{
        \begin{array}{ll}
                1-R_i^+\quad&\mbox{if}\,\,\,u_i>u_j\,,\\
                0\quad&\mbox{if}\,\,\,u_i=u_j\,,\\
                1-R_i^-\quad&\mbox{if}\,\,\,u_i<u_j\,,
        \end{array}\right.\qquad\qquad i,j=1,\dots,N\,.
\end{equation}

\begin{remark}
\label{remark_betaij}
If $P_i^+=0$, then $R_i^+$ can be defined arbitrarily (and the same holds for 
$P_i^-$ and $R_i^-$). Indeed, $P_i^+$ is used only for defining $\beta_{ij}$
with $j$ such that $u_i>u_j$. Then, if $P_i^+=0$, one has $a_{ij}\le0$ and
hence the matrix ${\mathbb B}({\rm U})$ defined by \eqref{asm-bij},
\eqref{asm-bii} does not depend on these $\beta_{ij}$.
\end{remark}

In view of Theorem~\ref{asm_existence}, the following lemma implies that the 
problem \eqref{23}, \eqref{23b} with the artificial diffusion matrix defined by
\eqref{asm-bij}, \eqref{asm-bii} and 
\eqref{def-beta_ij_1}--\eqref{def-beta_ij_5} is solvable.

\begin{lemma}\label{asm_beta_cont}
The functions $\beta_{ij}$ defined by
\eqref{def-beta_ij_1}--\eqref{def-beta_ij_5} satisfy the assumption 
\eqref{asm_beta2} for all $i,j\in\{1,\dots,N\}$.
\end{lemma}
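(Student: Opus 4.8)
The plan is to fix indices $i,j$ with $a_{ij}>0$ and to prove that the single scalar function $\Phi(\mathrm{U}):=\beta_{ij}(\mathrm{U})\,(u_j-u_i)$ is continuous on all of $\RR^N$, which is exactly the content of \eqref{asm_beta2}. The structural fact I would exploit is that, by \eqref{def-beta_ij_3} and \eqref{def-beta_ij_5}, one has $\beta_{ij}(\mathrm{U})\in[0,1]$ for every $\mathrm{U}$, so that $\Phi$ is a product of a uniformly bounded factor with the affine, hence continuous, factor $u_j-u_i$. The only conceivable sources of discontinuity are therefore the case distinction in \eqref{def-beta_ij_5} across the set $\{u_i=u_j\}$ and the division by $P_i^+$ or $P_i^-$ hidden inside $R_i^+$ and $R_i^-$.

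I would fix an arbitrary base point $\bar{\mathrm{U}}=(\bar u_1,\dots,\bar u_N)$ and argue by three cases according to the sign of $\bar u_i-\bar u_j$. When $\bar u_i=\bar u_j$ the value is $\Phi(\bar{\mathrm{U}})=0$, and continuity follows immediately from the boundedness of $\beta_{ij}$: the estimate $|\Phi(\mathrm{U})|\le|u_j-u_i|$ forces $\Phi(\mathrm{U})\to0$ as $\mathrm{U}\to\bar{\mathrm{U}}$, precisely as in \eqref{ui_is_uj}. This disposes of the jump of $\beta_{ij}$ across $\{u_i=u_j\}$, since that jump is always multiplied by a factor vanishing on this very set.

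For $\bar u_i>\bar u_j$ I would work in the neighborhood on which $u_i>u_j$, where \eqref{def-beta_ij_5} reduces to $\beta_{ij}=1-R_i^+$. The decisive observation is that, because $a_{ij}>0$, the summand $a_{ij}\,(u_i-u_j)^+$ in \eqref{def-beta_ij_1} is strictly positive there, so $P_i^+>0$ throughout this neighborhood. Since $P_i^+$ and $Q_i^+$ in \eqref{def-beta_ij_1}--\eqref{def-beta_ij_2} are finite sums of the continuous maps $(u_i-u_k)^\pm$ and $(u_k-u_i)^\pm$, the ratio $Q_i^+/P_i^+$ is continuous where $P_i^+>0$, hence so is $R_i^+=\min\{1,Q_i^+/P_i^+\}$, and therefore so is $\Phi$. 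The case $\bar u_i<\bar u_j$ is completely symmetric: now $\beta_{ij}=1-R_i^-$, and the term $a_{ij}\,(u_i-u_j)^-$ makes $P_i^-<0$ locally, keeping that denominator away from zero.

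The one point needing care — and the only place where the hypothesis $a_{ij}>0$ is genuinely used — is guaranteeing that $R_i^+$ (respectively $R_i^-$) is never evaluated through a vanishing denominator at a point where it actually feeds into $\Phi$. This is exactly why the continuity in \eqref{asm_beta2} is conditioned on $a_{ij}>0$: the sign condition forces $P_i^+\neq0$ (respectively $P_i^-\neq0$) precisely on the open region where the corresponding branch of \eqref{def-beta_ij_5} is active, while on the complementary set $\{u_i=u_j\}$ the boundedness argument takes over. Consequently $\Phi$ is continuous at every $\bar{\mathrm{U}}$, which is the assertion of the lemma; combined with Theorem~\ref{asm_existence}, it yields solvability of \eqref{23}, \eqref{23b}.
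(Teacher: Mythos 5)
Your proof is correct and follows essentially the same route as the paper's own argument: a case split at an arbitrary base point $\bar{\rm U}$, with the bound $\vert\Phi({\rm U})\vert\le\vert u_i-u_j\vert$ (i.e.\ \eqref{ui_is_uj}) handling the case $\bar u_i=\bar u_j$, and the local sign conditions $P_i^+>0$ (resp.\ $P_i^-<0$), forced by $a_{ij}>0$ on the region where the corresponding branch of \eqref{def-beta_ij_5} is active, giving continuity of $R_i^\pm$ and hence of $\Phi$ in the remaining cases. You even spell out the case $\bar u_i<\bar u_j$, which the paper dismisses as analogous, so nothing is missing.
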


\begin{proof}
Consider any $i,j\in\{1,\dots,N\}$ such that $i\neq j$ and $a_{ij}>0$ and any 
$\bar{\rm U}=(\bar{u}_1,\dots,\bar{u}_N)\in\RR^N$. Like in the proof of
Theorem~\ref{asm_existence}, we want to show that 
$\Phi({\rm U}):=\beta_{ij}({\rm U})(u_j-u_i)$ is continuous at the point
$\bar{\rm U}$. If $\bar{u}_i=\bar{u}_j$, the continuity follows again from
\eqref{ui_is_uj}. If $\bar{u}_i>\bar{u}_j$, one again uses the fact that 
$u_i>u_j$ for ${\rm U}$ in a ball $B$ around $\bar{\rm U}$. Thus, for 
${\rm U}\in B$, one has
\begin{equation*}
   \Phi({\rm U})=(1-R^+_i({\rm U}))\,(u_j-u_i).
\end{equation*}
Since both $P_i^+$ and $Q_i^+$ are continuous and $P_i^+$ is positive in
$B$, the function $\Phi$ is continuous in $B$ and hence also at $\bar{\rm U}$.
If $\bar{u}_i<\bar{u}_j$, one proceeds analogously.
\end{proof}

\begin{remark}
\label{remark_continuity}
It is easy to show that $\beta_{ij}({\rm U})=\beta_{ij}(\alpha\,{\rm U})$ for
any ${\rm U}\in{\mathbb R}^N$ and any $\alpha\neq0$. This implies that 
$\beta_{ij}$ itself is not continuous since otherwise one would conclude that 
$\beta_{ij}({\rm U})=0$ for any ${\rm U}\in{\mathbb R}^N$ due to the fact that
$\beta_{ij}(0)=0$.
\end{remark}

Now let us investigate the validity of Assumption (A2).

\begin{theorem}\label{A2_for_ASM}
Let the matrix $(b_{ij}({\rm U}))_{i,j=1}^N$ be defined by \eqref{asm-bij},
\eqref{asm-bii} and \eqref{def-beta_ij_1}--\eqref{def-beta_ij_5}. Then
Assumption (A2) holds with $S_i$ defined in \eqref{ass_S_1}.
\end{theorem}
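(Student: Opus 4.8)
The plan is to verify Assumption (A2) directly. Fix ${\rm U}=(u_1,\dots,u_N)\in\RR^N$ and $i\in\{1,\dots,M\}$, and suppose $u_i$ is a strict local extremum with respect to $S_i$ from \eqref{ass_S_1}. By symmetry of the two cases, I would treat the situation $u_i>u_j$ for all $j\in S_i$ (a strict local maximum); the case of a strict minimum follows by an entirely analogous argument. I must show that $a_{ij}+b_{ij}({\rm U})\le0$ for every $j\in S_i$.

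First I would dispose of the trivial sub-case. Recall from \eqref{asm-bij} that $b_{ij}({\rm U})=-\max\{\beta_{ij}({\rm U})\,a_{ij},0,\beta_{ji}({\rm U})\,a_{ji}\}$. If $a_{ij}\le0$, then $b_{ij}({\rm U})\le0$ gives $a_{ij}+b_{ij}({\rm U})\le0$ immediately, so the interesting case is $a_{ij}>0$. Here the whole point is to exploit the extremum hypothesis: since $u_i>u_j$, definition \eqref{def-beta_ij_5} gives $\beta_{ij}({\rm U})=1-R_i^+$, and from \eqref{asm-bij} one has $b_{ij}({\rm U})\le-\beta_{ij}({\rm U})\,a_{ij}=-(1-R_i^+)\,a_{ij}$. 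Hence $a_{ij}+b_{ij}({\rm U})\le a_{ij}-(1-R_i^+)\,a_{ij}=R_i^+\,a_{ij}$, and it remains to show $R_i^+\,a_{ij}\le0$. Since $a_{ij}>0$, this is equivalent to $R_i^+=0$.

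The crux of the proof, therefore, is to show that the strict local maximum forces $R_i^+=0$. By \eqref{def-beta_ij_3}, $R_i^+=\min\{1,Q_i^+/P_i^+\}$, so it suffices to prove $Q_i^+=0$. I would compute $Q_i^+$ from \eqref{def-beta_ij_2}: since $q_{ij}\ge0$, each summand $q_{ij}\,(u_j-u_i)^+$ is a nonnegative number that vanishes precisely when $u_j\le u_i$. Now the indices with $q_{ij}>0$ satisfy $a_{ij}\neq0$ or $a_{ji}>0$ (because $q_{ij}=\max\{|a_{ij}|,a_{ji}\}$), so these $j$ lie in $S_i$ as defined in \eqref{ass_S_1}; for such $j$ the extremum hypothesis gives $u_i>u_j$, whence $(u_j-u_i)^+=0$. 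For all remaining $j$ one has $q_{ij}=0$. Thus every term of $Q_i^+$ vanishes and $Q_i^+=0$, so $R_i^+=0$ and the chain of inequalities closes.

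I expect the only genuine subtlety to be the bookkeeping in the last paragraph, namely confirming that every index $j$ contributing a positive weight $q_{ij}$ to $Q_i^+$ really belongs to $S_i$, so that the strict-maximum hypothesis applies to it; this is exactly where the precise form \eqref{ass_S_1} of $S_i$ is used, and it is what makes the modification $q_{ij}$ from \eqref{eq:qij} compatible with Assumption (A2). Everything else is a short sign computation. The strict-minimum case is handled by replacing $R_i^+,Q_i^+,P_i^+$ with $R_i^-,Q_i^-,P_i^-$ and reversing the inequalities throughout.
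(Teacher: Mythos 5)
Your proposal is correct and follows essentially the same route as the paper's proof: dispose of $a_{ij}\le0$ trivially, and for $a_{ij}>0$ use the strict-extremum hypothesis together with the definition \eqref{eq:qij} of $q_{ij}$ and the form \eqref{ass_S_1} of $S_i$ to conclude $Q_i^+=0$ (resp.\ $Q_i^-=0$), hence $R_i^+=0$, $\beta_{ij}({\rm U})=1$, and $a_{ij}+b_{ij}({\rm U})\le0$. The one step the paper states explicitly and you leave implicit is that $P_i^+\ge a_{ij}\,(u_i-u_j)^+>0$ in this situation; this is needed for the quotient $Q_i^+/P_i^+$ in \eqref{def-beta_ij_3} to be well defined, so that the inference $Q_i^+=0\Rightarrow R_i^+=0$ is legitimate (recall Remark~\ref{remark_betaij}: when $P_i^+=0$ the value of $R_i^+$ is arbitrary), but it follows at once from $a_{ij}>0$ and $u_i>u_j$, so this is a one-line repair rather than a genuine gap.
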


\begin{proof}
Consider any ${\rm U}=(u_1,\dots,u_N)\in\RR^N$, $i\in\{1,\dots,M\}$, and 
$j\in S_i$. Let $u_i$ be a strict local extremum of $\rm U$ with respect to 
$S_i$. We want to prove that
\begin{equation}\label{aa}
   a_{ij}+b_{ij}({\rm U})\le0\,.
\end{equation}
If $a_{ij}\le0$, then \eqref{aa} holds since $b_{ij}({\rm U})\le0$. Thus, let
$a_{ij}>0$. If $u_i>u_k$ for any $k\in S_i$, then 
$P_i^+\ge a_{ij}\,(u_i-u_j)^+>0$, $Q_i^+=0$ and hence $\beta_{ij}=1-R_i^+=1$.
Similarly, if $u_i<u_k$ for any $k\in S_i$, then 
$P_i^-\le a_{ij}\,(u_i-u_j)^-<0$, $Q_i^-=0$ and hence $\beta_{ij}=1-R_i^-=1$. 
Thus, $b_{ij}({\rm U})\le -a_{ij}$, which proves \eqref{aa}.
\end{proof}

Theorems~\ref{asm_existence} and \ref{A2_for_ASM} show that, assuming the
validity of \eqref{mp2} and \eqref{32}, solutions of the nonlinear algebraic 
problem \eqref{23}, \eqref{23b} with the artificial diffusion matrix defined 
by \eqref{asm-bij}, \eqref{asm-bii} and 
\eqref{def-beta_ij_1}--\eqref{def-beta_ij_5} satisfy all the versions of the
DMP formulated in Theorems~\ref{thm_local_DMP} and~\ref{thm:general_DMP} and
Corollary~\ref{cor:global_DMP}, without any additional assumptions on the
matrix $\mathbb A$. Therefore, if this new method is applied to the algebraic
problem \eqref{21}, \eqref{21b} defined by \eqref{13}--\eqref{15}, the DMPs
hold for both definitions \eqref{eq_form_a} and \eqref{eq_lumped_react} of the
bilinear form and for any triangulation $\TT_h$. Moreover, since $b_{ij}$
defined by \eqref{asm-bij} satisfies \eqref{eq:crude_est}, the finite element
function $u_h$ corresponding to the solution of \eqref{23}, \eqref{23b}
satisfies the error estimate \eqref{eq:error_est}.

\begin{remark}
If \eqref{def-beta_ij_1} is replaced by the original definition of $P_i^\pm$ 
from \eqref{46}, then the algebraically stabilized scheme introduced in this
section is not well defined. Indeed, in this case, $P_i^\pm$ may vanish also if
$a_{ij}>0$ so that the corresponding $\beta_{ij}$ (which may be not well 
defined) is needed for computing the matrix ${\mathbb B}({\rm U})$ defined by
\eqref{asm-bij}, \eqref{asm-bii} (cf.~also Remark~\ref{remark_betaij}).
Moreover, one can show that, independently of how 
$R_i^\pm$ are defined in these cases, the continuity assumption 
\eqref{asm_beta2} is not satisfied in general.
\end{remark}

\begin{remark}
As we already mentioned, a special case of the nonlinear algebraic problem 
\eqref{23}, \eqref{23b} with the artificial diffusion matrix defined
by \eqref{asm-bij} and \eqref{asm-bii} is the AFC scheme from Sect.~\ref{s6}.
Another example of a method having this structure is the nonlinear
stabilization based on a graph-theoretic approach described in \cite{BB17}.
Here, the artificial diffusion matrix ${\mathbb B}({\rm U})$ is given by
\begin{displaymath}
   ({\mathbb B}({\rm U})\,{\rm V})_i
   =\sum_{j\in S_i\cup\{i\}}\,\nu_{ij}({\rm U})\,l_{ij}\,v_j\qquad
   \forall\,\,{\rm V}\in{\mathbb R}^N,\,i=1,\dots,N\,,
\end{displaymath}
where $S_i$ is defined by \eqref{def_S}, $l_{ij}:=2\,\delta_{ij}-1$ is the
graph-theoretic Laplacian, and $\nu_{ij}$ is the artificial diffusion given by
\begin{displaymath}
   \nu_{ij}({\rm U})
   =\max\{\alpha_i({\rm U})\,a_{ij},0,\alpha_j({\rm U})\,a_{ji}\}\quad
   \forall\,\,i\neq j\,,\qquad
   \nu_{ii}({\rm U})=\sum_{j\in S_i}\,\nu_{ij}({\rm U})\,,
   \qquad
\end{displaymath}
with a shock detector $\alpha_i({\rm U})\in[0,1]$. Thus, the artificial
diffusion matrix satisfies \eqref{asm-bij} and \eqref{asm-bii} with 
$\beta_{ij}=\alpha_i$ for $i,j=1,\dots,N$.
\end{remark}

\section{Numerical results}\label{numerics}

In the remaining part of the paper we shall refer to the system 
\eqref{23}, \eqref{23b} with the artificial diffusion matrix defined by 
\eqref{asm-bij}, \eqref{asm-bii} and \eqref{def-beta_ij_1}--\eqref{def-beta_ij_5} as to the 
Monotone Upwind-type Algebraically Stabilized (MUAS) method.
The AFC scheme with the Kuzmin limiter formulated in
Sect.~\ref{s6} will be simply called AFC scheme in the following. Our aim will
now be to compare the AFC scheme with the MUAS method numerically for the finite element
discretizations of \eqref{strong-steady} presented in Sect.~\ref{s2}. If not
stated otherwise, the bilinear form \eqref{eq_form_a} will be considered in
the discrete problem.

Under condition \eqref{assumption_min}, the only difference between the MUAS method and the
AFC scheme consists in the definition of $Q_i^\pm$, cf.~\eqref{def-beta_ij_2}
and \eqref{46b}. Our numerical experiments show that the difference between 
the results of the two methods is very small in this case. Since numerical
results for the AFC scheme under condition \eqref{assumption_min} have been
reported in many other papers, we shall concentrate on cases where condition
\eqref{assumption_min} is not satisfied.

As discussed in Sect.~\ref{s6}, condition \eqref{assumption_min} may be
violated if the triangulation $\TT_h$ is not of Delaunay type or if the
reaction coefficient $c$ is sufficiently large in comparison with
$\varepsilon$ and $\|\bb\|$. We shall start with a
reaction-dominated problem formulated in the following example.

\begin{example} (Reaction-dominated problem) \label{ex:reaction}
Problem \eqref{strong-steady} is considered with $\Omega = (0,1)^2$,
$\varepsilon=10^{-8}$, $\bb=(0.004,0.012)^T$, $c=g=1$, and $u_b=0$.
\end{example}

A natural question is why not to set simply $\bb=\bold0$ in
Example~\ref{ex:reaction}. However, this would lead to 
a symmetric matrix $(a_{ij})_{i,j=1}^N$ and since the AFC scheme is not
uniquely defined if $a_{ij}=a_{ji}>0$ for some indices $i\neq j$, it would be
difficult to interpret the results. Note also that since $c$ and $g$ are 
constant in
Example~\ref{ex:reaction}, equation~\eqref{strong-steady} can be reformulated
into a form with vanishing right-hand side. Indeed, if $u$ solves
\eqref{strong-steady}, then $(u-1)$ solves \eqref{strong-steady} with $g$
replaced by $0$ and $u_b=-1$. Then the maximum principles \eqref{eq_max_1a},
\eqref{eq_max_1b} with $G=\Omega$ imply that $(u-1)\in[-1,0]$ and hence
$u\in[0,1]$ in $\Omega$. The solution of \eqref{strong-steady} satisfies
$u\approx1$ away from layers which are located around the boundary of $\Omega$.

We will present results obtained on a uniform triangulation of the type
depicted on the left of Fig.~\ref{fig:grids} containing $21\times21$ vertices.
\begin{figure}[t]
\centerline{
\includegraphics[width=0.2\textwidth]{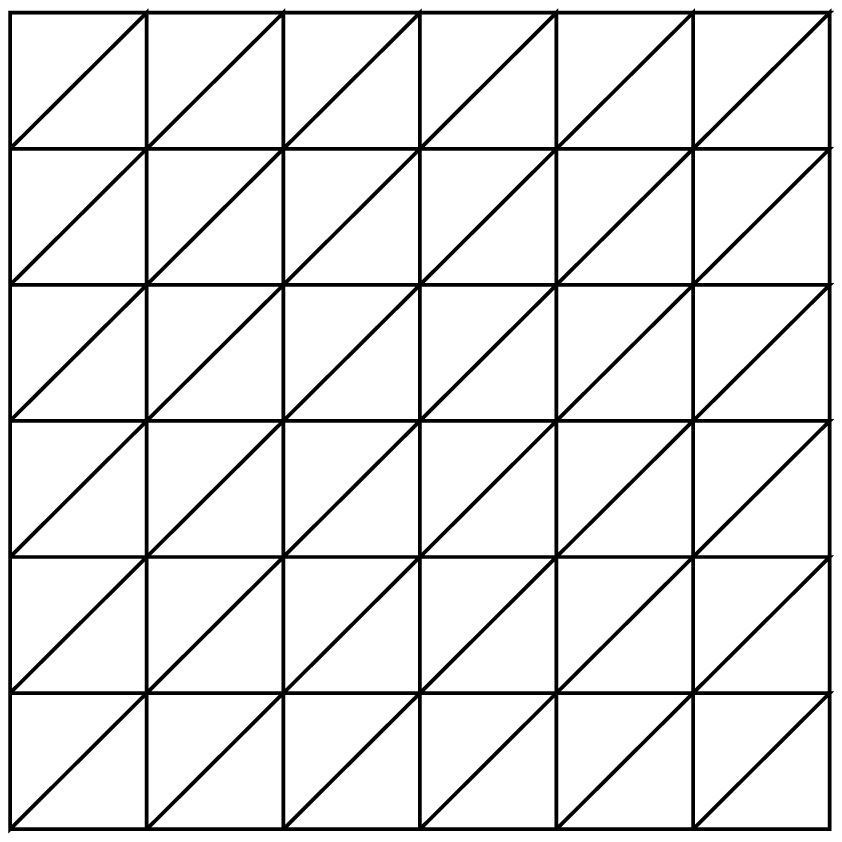}\hspace*{5ex}
\includegraphics[width=0.2\textwidth]{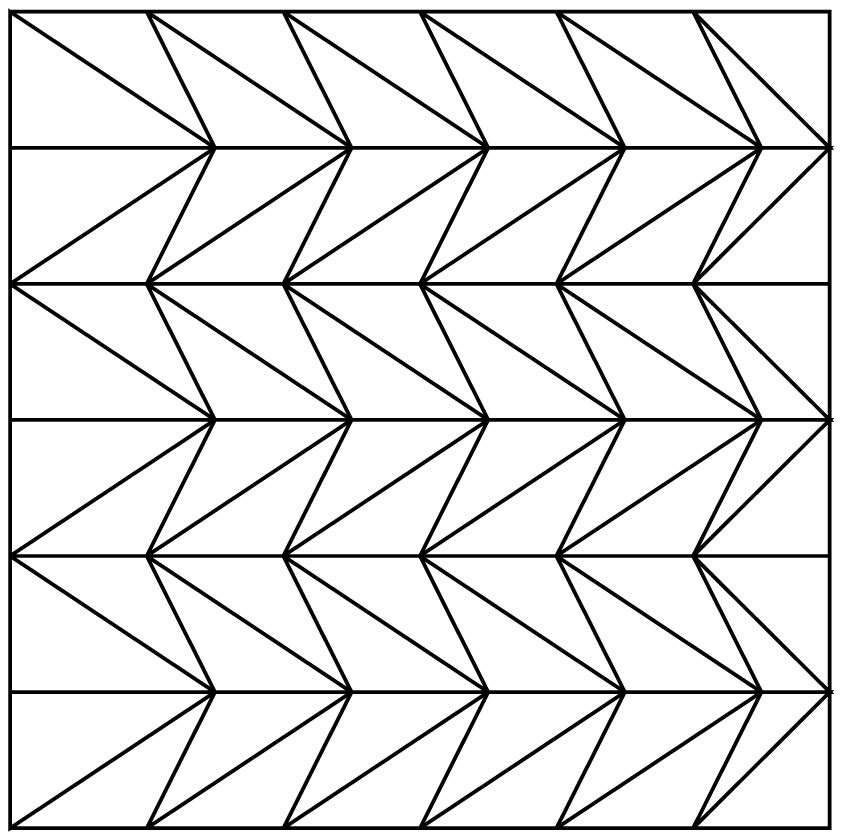}\hspace*{5ex}
\includegraphics[width=0.2\textwidth]{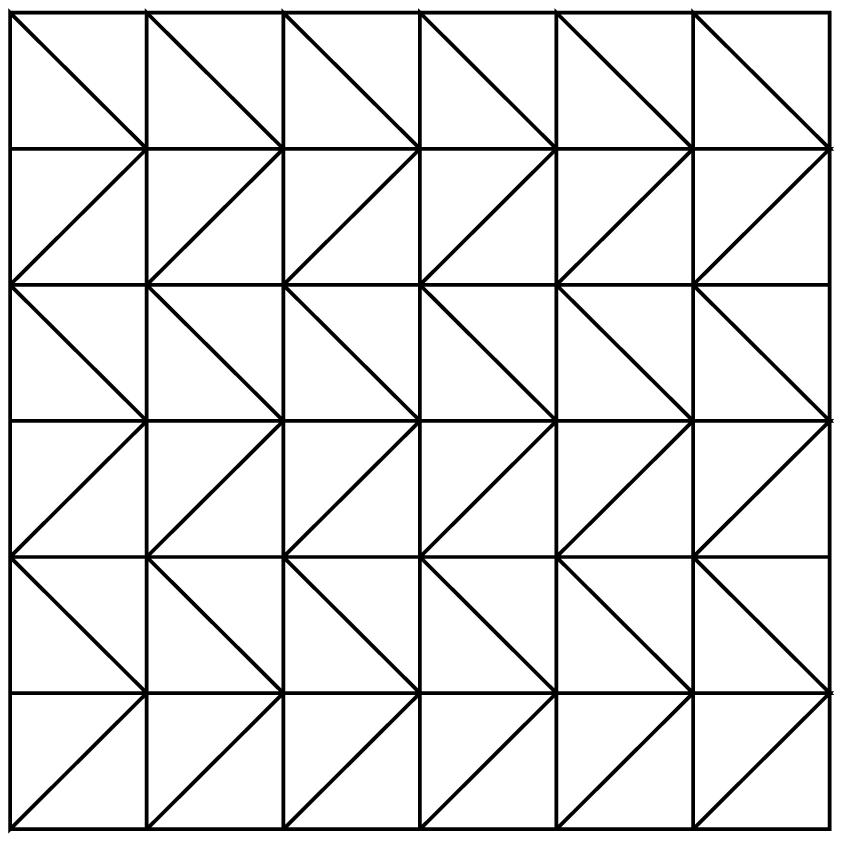}
}
\caption{Types of triangulations considered in numerical experiments}
\label{fig:grids}
\end{figure}
Then the matrix $(a_{ij})_{i,j=1}^N$ defined by \eqref{13} with $a_h=a$ has
only nonnegative entries and condition \eqref{assumption_min} is not satisfied.
The AFC scheme does not satisfy the DMP and provides a nonphysical solution,
see Fig.~\ref{fig:ex1} (left).
\begin{figure}[t]
\centerline{
\includegraphics[width=0.36\textwidth]{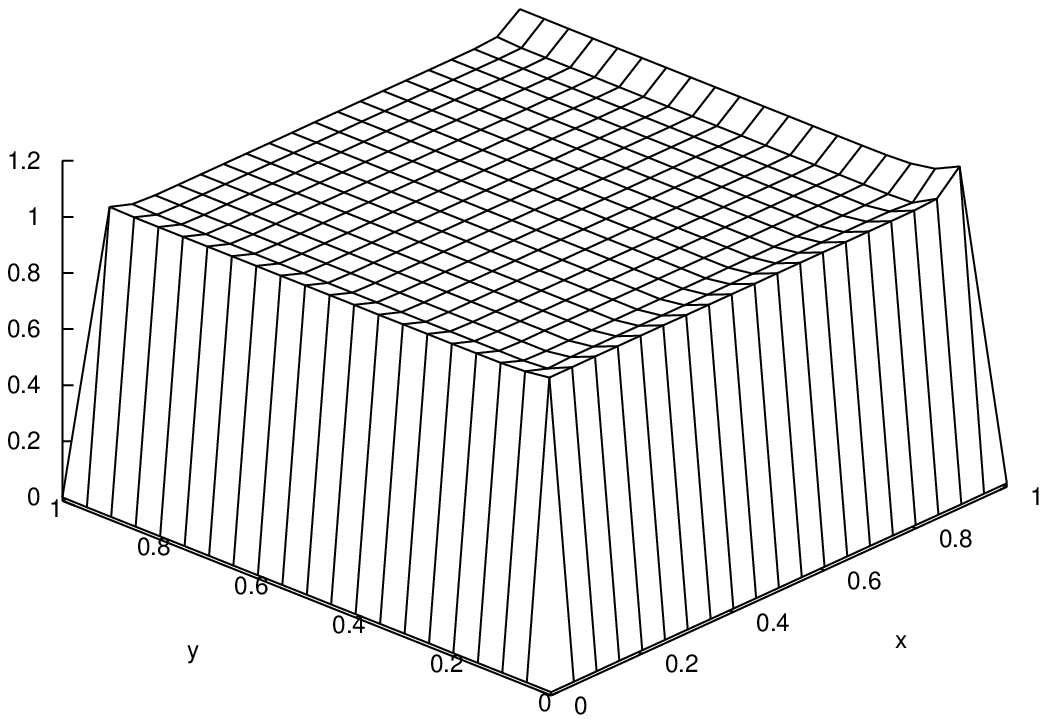}\hspace*{-3ex}
\includegraphics[width=0.36\textwidth]{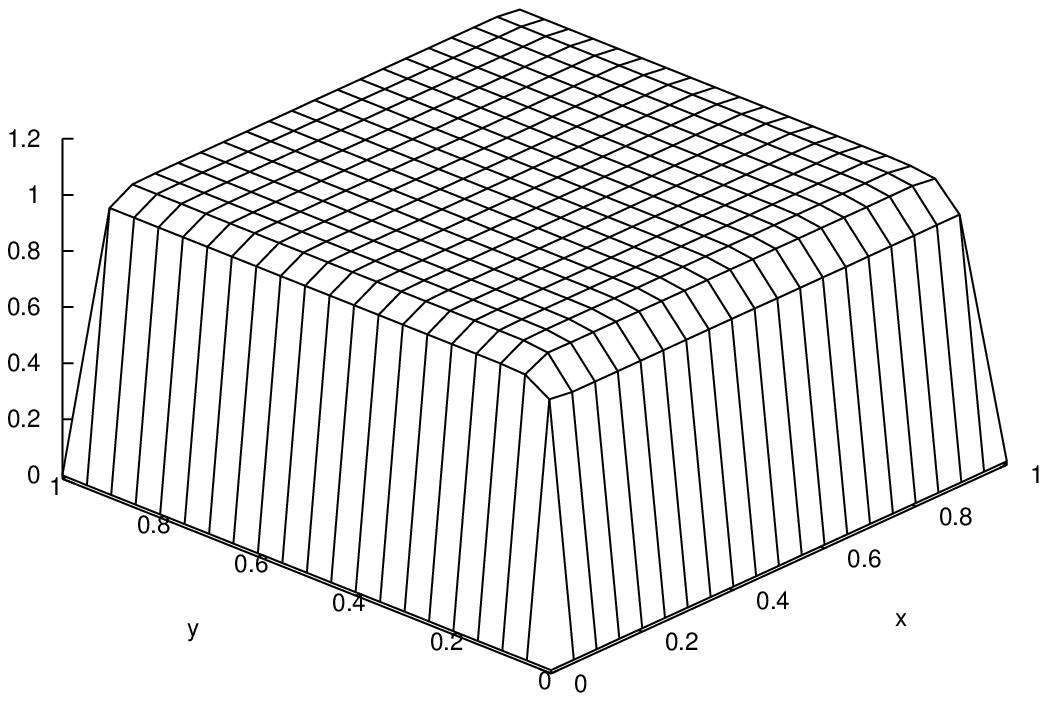}\hspace*{-3ex}
\includegraphics[width=0.36\textwidth]{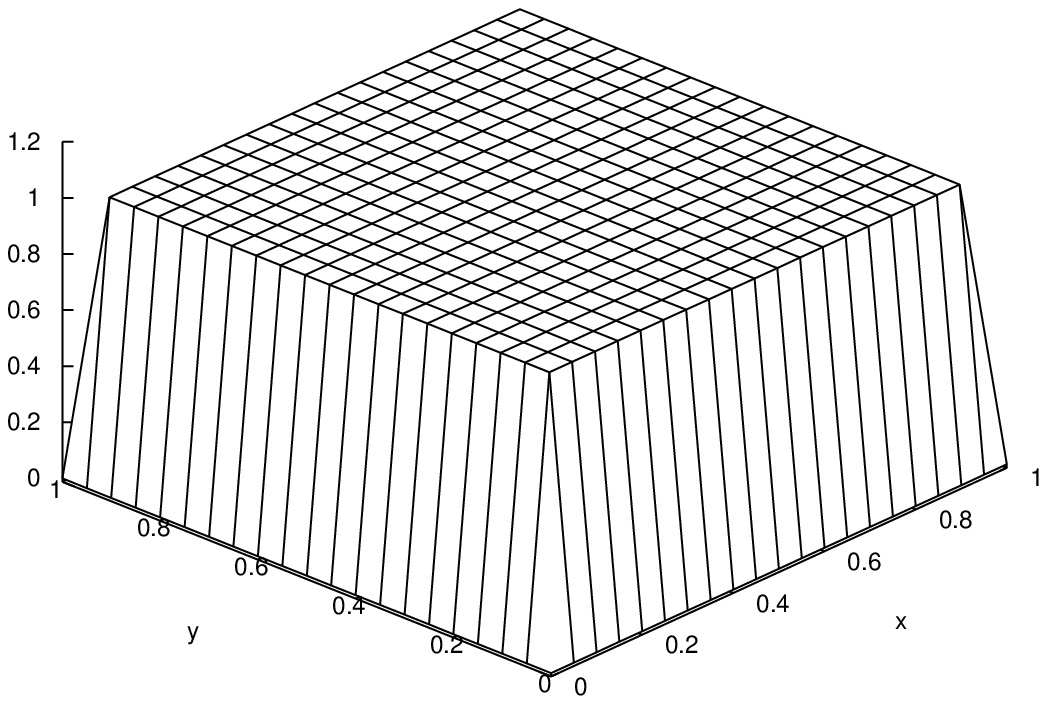}
}
\caption{Approximate solutions of Example~\ref{ex:reaction} computed on a
triangulation of the type shown on the left of Fig.~\ref{fig:grids}: 
AFC method (left), AFC method with lumping (middle), MUAS method (right)}\label{fig:ex1}
\end{figure}
As discussed in Sect.~\ref{s6}, a possible remedy is to define the bilinear
form $a_h$ by \eqref{eq_lumped_react}, i.e., to consider a lumping of the
reaction term. This provides a physically consistent approximate solution but
may lead to a smearing of the layers, see Fig.~\ref{fig:ex1} (middle). On the
other hand, applying the MUAS method, one obtains a very accurate solution with sharp
layers, see Fig.~\ref{fig:ex1} (right).

\begin{example} (Convection-dominated problem) 
\label{ex:outflow_layers}
Problem \eqref{strong-steady} is considered with $\Omega = (0,1)^2$,
$\varepsilon=10^{-2}$, $\bb=(\cos(-\pi/3),\sin(-\pi/3))^T$, $c=g=0$, and
$$
u_b(x,y) = \left\{ \begin{array}{ll}
0 & \quad\mbox{for $x=1$ or $y=0$,}\\
1 & \quad\mbox{else.}
\end{array}\right.
$$
To satisfy the assumptions on problem \eqref{strong-steady}, the 
discontinuous function $u_b$ can be replaced by a smooth function such that
the approximate solutions do not change for the triangulation considered in the
numerical experiments.
%influencing the discrete solution computed on the triangulation described below
\end{example}

This example will be used to demonstrate that the AFC scheme can lead to
physically inconsistent solutions also in the convection-dominated case. To
this end, one has to use a triangulation which is not of Delaunay type. We
again consider a triangulation containing $21\times21$ vertices which is now
obtained from a triangulation of the type depicted on the right in
Fig.~\ref{fig:grids} by shifting interior nodes to the right by half
of the horizontal mesh width on each even horizontal mesh line. This gives 
a triangulation of the type shown in the middle of Fig.~\ref{fig:grids} for
which condition \eqref{assumption_min} is again not satisfied.
Like in Fig.~\ref{fig:ex1}, the results will be visualized using a uniform
square mesh having the same number of vertices (and hence also the same
horizontal mesh lines) as the mentioned triangulation.

According to the maximum principles \eqref{eq_max_2a}, \eqref{eq_max_2b}, the
solution of \eqref{strong-steady} with the data specified in
Example~\ref{ex:outflow_layers} satisfies $u\in[0,1]$ in $\Omega$.
Fig.~\ref{fig:ex2} (left) shows that this property is not preserved by the AFC 
scheme
\begin{figure}[t]%
\centerline{
\includegraphics[width=0.36\textwidth]{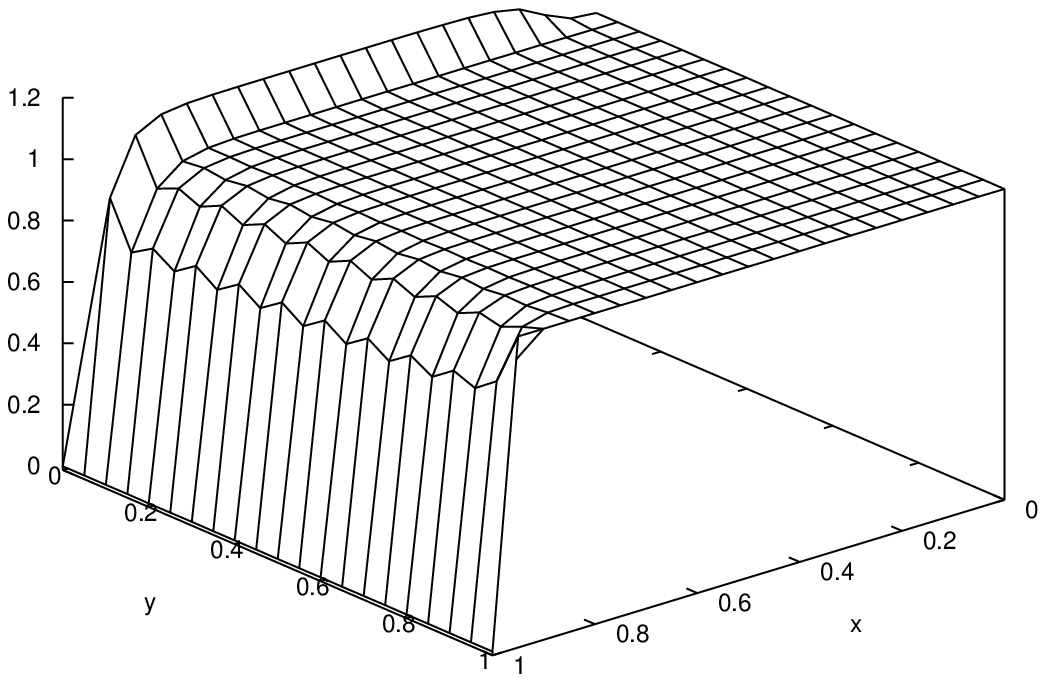}\hspace*{5ex}
\includegraphics[width=0.36\textwidth]{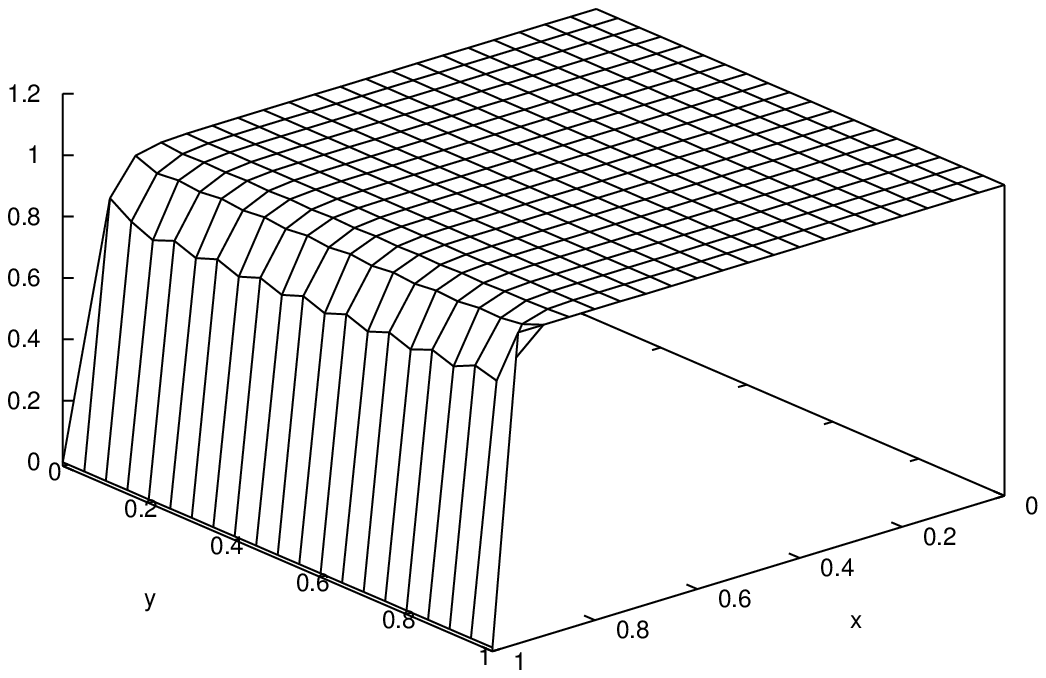}
}
\caption{Approximate solutions of Example~\ref{ex:outflow_layers} computed on a
triangulation of the type shown in the middle of Fig.~\ref{fig:grids}: 
AFC method (left), MUAS method (right)}\label{fig:ex2}
\end{figure}
for which the approximate solution contains a significant overshoot along the
line $y=0$. On the other hand, the MUAS method provides a qualitatively correct
approximate solution respecting the DMP, see Fig.~\ref{fig:ex2} (right).

\begin{example} (Diffusion-dominated problem) \label{ex:smooth}
Problem \eqref{strong-steady} is considered with $\Omega = (0,1)^2$,
$\varepsilon=10$, $\bb = (3,2)^T$, $c=1$, $u_b=0$, and the right-hand side $g$ 
chosen so that
$$
u(x,y) = 100\,x^2\,(1-x)^2\,y\,(1-y)\,(1-2y)
$$
is the solution of \eqref{strong-steady}.
\end{example}

In \cite{BJK16}, this example was considered on triangulations constructed 
similarly as the one in the middle of Fig.~\ref{fig:grids}; the difference was
that the shift of the respective interior nodes was only the tenth of the 
horizontal mesh width. It was observed that the convergence orders of the AFC
scheme with respect to various norms tend to zero if fine meshes are used. This 
behavior is even more pronounced on meshes of the type shown in the middle of
Fig.~\ref{fig:grids} (where the shift of the nodes is the half of the
horizontal mesh width), see Table~\ref{tab1}. In the tables, the value of $ne$ represents the number of edges along one horizontal mesh line (thus,
$ne=6$ for the meshes in Fig.~\ref{fig:grids}). Note that a lumping of the
reaction term has no significant influence on the results in this case. On the
other hand, applying the MUAS method, one observes a convergence in all the norms, see
Table~\ref{tab2}. This behavior is connected with the fact that the definition
of $Q_i^\pm$ was changed from \eqref{46b} to \eqref{def-beta_ij_2}. If the
original definition \eqref{46b} is used in the MUAS method, then the accuracy
deteriorates and the convergence orders tend to zero on fine meshes, see
Table~\ref{tab3}. Nevertheless, the convergence may fail also for the MUAS method when
too distorted meshes are considered. An example is given in Table~\ref{tab4},
where the results were computed on triangulations obtained from those of the
type depicted on the right in Fig.~\ref{fig:grids} by shifting the respective
interior nodes by eight tenths of the horizontal mesh width. However, also in 
this case the results are more accurate than in case of the AFC scheme. 

A possible explanation of the observed deteriorations of convergence orders is 
the loss of the linearity preservation when using certain non-Delaunay meshes.
Let us recall that the scheme \eqref{23} is called linearity preserving if
${\mathbb B}({\rm U})$ vanishes for any vector ${\rm U}$ representing a linear 
function in $\Omega$. Under further assumptions, this property enables to prove 
improved error estimates, see, e.g., \cite{BJK18}. It can be verified, that, in 
case of Table~\ref{tab2}, the MUAS method is linearity preserving, which is not true for
the schemes used to compute the results in Tables \ref{tab1}, \ref{tab3}, and 
\ref{tab4}. This could also explain why the replacement of \eqref{def-beta_ij_2}
by \eqref{46b} leads to the deterioration of the results since the absolute
values of $Q_i^\pm$ given by \eqref{46b} are smaller or equal to those given 
by \eqref{def-beta_ij_2} and hence the linearity preservation is more likely 
to hold if \eqref{def-beta_ij_2} is used.

\begin{table}[h]
\begin{center}
\begin{minipage}{274pt}
\caption{Errors and convergence orders of approximate solutions of 
Example~\ref{ex:smooth} computed using the AFC scheme on triangulations of the 
type shown in the middle of Fig.~\ref{fig:grids}}\label{tab1}
\begin{tabular}{@{}rcccccc@{}}
\toprule
$ne$ & $\|u-u_h\|_{0,\Omega}^{}$ & order &   $\vert u-u_h\vert_{1,\Omega}^{}$ &
order & $\|u-u_h\|_h^{}$ & order\\
\midrule
  16 &  5.636e$-$2 &  0.22 & 6.741e$-$1 &  0.41 & 2.626e$+$0 &  0.24\\
  32 &  5.384e$-$2 &  0.07 & 5.908e$-$1 &  0.19 & 2.437e$+$0 &  0.11\\
  64 &  5.332e$-$2 &  0.01 & 5.661e$-$1 &  0.06 & 2.380e$+$0 &  0.03\\
 128 &  5.321e$-$2 &  0.00 & 5.593e$-$1 &  0.02 & 2.363e$+$0 &  0.01\\
 256 &  5.319e$-$2 &  0.00 & 5.575e$-$1 &  0.00 & 2.358e$+$0 &  0.00\\
 512 &  5.320e$-$2 &  0.00 & 5.570e$-$1 &  0.00 & 2.356e$+$0 &  0.00\\
1024 &  5.321e$-$2 &  0.00 & 5.568e$-$1 &  0.00 & 2.356e$+$0 &  0.00\\
\botrule
\end{tabular}
\end{minipage}
\end{center}
\end{table}
\begin{table}[h]
\begin{center}
\begin{minipage}{274pt}
\caption{Errors and convergence orders of approximate solutions of 
Example~\ref{ex:smooth} computed using the MUAS method on triangulations of the 
type shown in the middle of Fig.~\ref{fig:grids}}\label{tab2}
\begin{tabular}{@{}rcccccc@{}}
\toprule
$ne$ & $\|u-u_h\|_{0,\Omega}^{}$ & order &   $\vert u-u_h\vert_{1,\Omega}^{}$ &
order & $\|u-u_h\|_h^{}$ & order\\
\midrule
  16 &  2.206e$-$2 &  1.60 & 4.847e$-$1 &  0.86 & 1.581e$+$0 &  0.88\\
  32 &  6.967e$-$3 &  1.66 & 2.505e$-$1 &  0.95 & 8.038e$-$1 &  0.98\\
  64 &  2.249e$-$3 &  1.63 & 1.263e$-$1 &  0.99 & 4.034e$-$1 &  0.99\\
 128 &  7.770e$-$4 &  1.53 & 6.287e$-$2 &  1.01 & 2.003e$-$1 &  1.01\\
 256 &  2.471e$-$4 &  1.65 & 3.115e$-$2 &  1.01 & 9.904e$-$2 &  1.02\\
 512 &  7.108e$-$5 &  1.80 & 1.544e$-$2 &  1.01 & 4.901e$-$2 &  1.02\\
1024 &  1.915e$-$5 &  1.89 & 7.677e$-$3 &  1.01 & 2.433e$-$2 &  1.01\\
\botrule
\end{tabular}
\end{minipage}
\end{center}
\end{table}
\begin{table}[h]
\begin{center}
\begin{minipage}{274pt}
\caption{Errors and convergence orders of approximate solutions of 
Example~\ref{ex:smooth} computed using the MUAS method with $Q_i^\pm$ defined by 
\eqref{46b} instead of \eqref{def-beta_ij_2}. The used triangulations are
of the type shown in the middle of Fig.~\ref{fig:grids}}\label{tab3}
\begin{tabular}{@{}rcccccc@{}}
\toprule
$ne$ & $\|u-u_h\|_{0,\Omega}^{}$ & order &   $\vert u-u_h\vert_{1,\Omega}^{}$ &
order & $\|u-u_h\|_h^{}$ & order\\
\midrule
  16 &  7.677e$-$2 &  0.42 & 7.526e$-$1 &  0.40 & 3.019e$+$0 &  0.28\\
  32 &  6.399e$-$2 &  0.26 & 6.382e$-$1 &  0.24 & 2.657e$+$0 &  0.18\\
  64 &  5.806e$-$2 &  0.14 & 5.903e$-$1 &  0.11 & 2.488e$+$0 &  0.09\\
 128 &  5.543e$-$2 &  0.07 & 5.711e$-$1 &  0.05 & 2.415e$+$0 &  0.04\\
 256 &  5.426e$-$2 &  0.03 & 5.632e$-$1 &  0.02 & 2.383e$+$0 &  0.02\\
 512 &  5.372e$-$2 &  0.01 & 5.598e$-$1 &  0.01 & 2.369e$+$0 &  0.01\\
1024 &  5.346e$-$2 &  0.01 & 5.582e$-$1 &  0.00 & 2.362e$+$0 &  0.00\\
\botrule
\end{tabular}
\end{minipage}
\end{center}
\end{table}
\begin{table}[h]
\begin{center}
\begin{minipage}{274pt}
\caption{Errors and convergence orders of approximate solutions of 
Example~\ref{ex:smooth} computed using the MUAS method on triangulations of the 
type depicted in the middle of Fig.~\ref{fig:grids} obtained by shifting the 
respective interior nodes by eight tenths of the horizontal mesh width}
\label{tab4}
\begin{tabular}{@{}rcccccc@{}}
\toprule
$ne$ & $\|u-u_h\|_{0,\Omega}^{}$ & order &   $\vert u-u_h\vert_{1,\Omega}^{}$ &
order & $\|u-u_h\|_h^{}$ & order\\
\midrule
  16 &  4.589e$-$2 &  1.08 & 6.405e$-$1 &  0.70 & 2.303e$+$0 &  0.72\\
  32 &  2.528e$-$2 &  0.86 & 3.834e$-$1 &  0.74 & 1.326e$+$0 &  0.80\\
  64 &  1.714e$-$2 &  0.56 & 2.442e$-$1 &  0.65 & 8.316e$-$1 &  0.67\\
 128 &  1.347e$-$2 &  0.35 & 1.758e$-$1 &  0.47 & 5.948e$-$1 &  0.48\\
 256 &  1.178e$-$2 &  0.19 & 1.468e$-$1 &  0.26 & 4.956e$-$1 &  0.26\\
 512 &  1.100e$-$2 &  0.10 & 1.355e$-$1 &  0.12 & 4.576e$-$1 &  0.12\\
1024 &  1.062e$-$2 &  0.05 & 1.311e$-$1 &  0.05 & 4.428e$-$1 &  0.05\\
\botrule
\end{tabular}
\end{minipage}
\end{center}
\end{table}

\begin{remark} Comprehensive numerical studies of the MUAS method and, in particular, 
comparisons with the AFC schemes with Kuzmin limiter and with BJK limiter can be found in 
\cite{JJK21}. In this paper, the behavior of these methods on adaptively
refined meshes, 
with conforming closure or with hanging vertices, is studied. The assessment focuses on the 
satisfaction of the global DMP, the accuracy of the numerical solutions, and the efficiency 
of the solver for the arising nonlinear problems. 
\end{remark}
\newpage

\end{document}